\renewcommand{\theequation}{\thesection.\arabic{equation}}
\newtheorem{theorem}{Theorem}
\newtheorem{lemma}{Lemma}
\newtheorem{proposition}{Proposition}
\newtheorem{corollary}{Corollary}
\newtheorem{remark}{Remark}
\newtheorem{definition}{Definition}
\newcommand{\eqnsection}{
\renewcommand{\theequation}{\thesection.\arabic{equation}}
    \makeatletter
    \csname  @addtoreset\endcsname{equation}{section}
    \makeatother}
\def\z{{\mathbb Z}}
\def\ali{\hfill\break}
\def\W{\Omega}
\def\Z{{{\Bbb Z}}}
\def\P{{{\Bbb P}}}
\def\R{{{\Bbb R}}}
\def\C{{{\Bbb C}}}
\def\BR{{{\Bbb R}}}
\def\BC{{{\Bbb C}}}
\def\ue{{\underline e}}
\def\oe{{\overline e}}
\def\E{{{\Bbb E}}}
\def\dive{{\hbox{div}}}
\def\hhh{{{\mathcal H}}}
\def\indic{{{\mathbbm 1}}}
\def\Id{{\hbox{Id}}}
\def\cU{{\vert U\vert}}
\def\cE{{\vert E\vert}}
\def\oa{{\overline \alpha}}
\def\strength{{\hbox{strength}}}
\author[C. Sabot]{ Christophe Sabot}
\address{
Université de Lyon, Université Lyon 1, CNRS UMR5208, Institut
Camille Jordan, 43 bd du 11 nov., 69622 Villeurbanne cedex}
\email{sabot@math.univ-lyon1.fr}
\title[Dirichlet Environment ]{Random walks in random Dirichlet environment are transient in dimension $d\ge 3$ }
\keywords{Random walk in random environment, Dirichlet
distribution, Reinforced random walks, Max-Flow Min-Cut theorem}
\subjclass[2000]{primary 60K37, 60K35, secondary 5C20}
\thanks{This work was partly supported by the ANR project MEMEMO}
\begin{document}

\maketitle

{\bf Abstract:} We consider random walks in random Dirichlet
environment (RWDE) which is a special type of random walks in
random environment where the exit probabilities at each site are
i.i.d. Dirichlet random variables. On $\Z^d$, RWDE are
parameterized by a $2d$-uplet of positive reals. We prove that for
all values of the parameters, RWDE are transient in dimension
$d\ge 3$. We also prove that the Green function has some finite
moments and we characterize the finite moments. Our result is more
general and applies for example to finitely generated symmetric
transient Cayley graphs. In terms of reinforced random walks it
implies that directed edge reinforced random walks are transient
for $d\ge 3$.

\section{Introduction}

 Random Walks in Random Environment (RWRE) have
received a considerable attention in the last years. A lot is
known for one-dimensional RWRE, but the situation is far from
being so clear in dimension 2 and larger. Progress has been made
for multidimensional RWRE (in particular, since the work of
Sznitman and Zerner, cf \cite{SznitmanZ}) essentially in two
directions: for ballistic RWRE (cf \cite{SznitmanZ},
\cite{SznitmanCLT}, \cite{Sabot-Ann} and reference therein) and
more recently for small perturbations of the simple random walk in
dimension $d\ge 3$. Nevertheless, many important questions as the
characterization of recurrence, of ballistic behavior, invariance
principle remain open. Recently, for RWRE which are small
isotropic perturbations of the simple random walk on $\Z^d$, $d\ge
3$, Bolthausen and Zeitouni (\cite{BZ}) made some progress in the
direction of the invariance principle and proved transience using
renormalization techniques (invariance principle for the
corresponding continuous model has previously been obtained by
Sznitman and Zeitouni, \cite{SZ}). In the case of ballistic RWRE,
Rassoul-Agha and Seppäläinen obtained a quenched functional cental
limit theorem (\cite{RA-S}). On very special cases as symmetric
environments (\cite{Lawler}), or environments admitting a bounded
cycle representation (\cite{Deuschel}) an invariance principal has
been obtained. On the ballistic case, some important progress has
been made on the description of large deviations
(\cite{Varadhan,Yilmaz1,Yilmaz2,RA-S2}). We refer to
\cite{Zeitouni} or \cite{sz-review} for surveys.

Among random walks in random environment, random walks in random
Dirichlet environment (RWDE) play a special role. It corresponds
to the case where the transition probabilities at each site are
chosen as i.i.d. Dirichlet random variables. RWDE have the
remarkable property that the annealed law is the law of a directed
edge reinforced random walk. This is a simple consequence of the
representation of Polya's urns as a Dirichlet mixtures of i.i.d.
sampling (\cite{Pemantle1}, \cite{ES1}). In \cite{ES2}, N.
Enriquez and the author have obtained a criterion for ballistic
behavior on $\Z^d$, later improved by L. Tournier
(\cite{Tournier}). Besides, in the one-dimensional case, it
appeared in \cite{ESZ1} that limit theorems in the sub-ballistic
regime are fully explicit in the case of Dirichlet environment,
highlighting the special role of Dirichlet environment among
random environments. Finally, in \cite{hypergeom} we have
described a precise relation between RWDE and hypergeometric
integrals associated with certain arrangement of hyperplanes.

As already mentioned, random Dirichlet environment is very natural
from the point of view of reinforced random walks. Reinforced
random walks have been introduced by Diaconis (cf \cite{Diaconis}
and Pemantle's thesis \cite{Pemantle1}). There are three natural
models of linearly reinforced random walks, namely vertex
reinforced random walks, (undirected) edge reinforced random
walks, and directed edge reinforced random walks. None of these
models is completely understood yet. Vertex reinforced random
walks are expected to get localized on a finite set, but this is
completely solved only in dimension 1 (\cite{Volkov, Tarres}).
Concerning edge reinforced random walks, Diaconis and Coppersmith
proved that it can be represented as a "complicated" mixture of
reversible random walks. Recurrence of edge reinforced random
walks has been proved by Merkl and Rolles (\cite{Merkl1}) on a two
dimensional graph (but the question is still open on $\Z^2$
itself). By comparison, directed edge reinforced random walks
correspond to the annealed law of RWDE, hence they are a simple
mixture of non-reversible Markov chains. The difficulty of this
model does not come from the representation as a RWRE, but lies in
the non-reversibility of this RWRE. We refer to
\cite{Pemantle-survey} for a survey on the subject.

On $\Z^d$, RWDE are parameterized by $2d$ positive reals
$(\alpha_1, \ldots, \alpha_{2d})$, one for each direction. We call
$(\alpha_i)$ the weights. In this paper, we prove that RWDE on
$\Z^d$, for $d\ge 3$, are transient for all values of the weights
(in particular for the case of unbiased weights). In fact, we
prove that the Green function has some finite moments and we
explicitly compute the critical integrability exponent $\kappa$,
i.e. the supremum of the reals $s>0$ such that the Green function
$G(0,0)^s$ has finite expectation. We show that this real is the
same as the one for the RWDE killed when it exists a finite large
ball (which has been computed by Tournier in \cite{Tournier}). In
some way, it means that trapping is only due to finite size traps
which come from the non uniform ellipticity of the environment.
Our result is in fact more general and applies for example to any
finitely generated Cayley graph on which the simple random walk is
transient (cf theorem \ref{t-oriented} and corollary
\ref{Cayley}). Compared to the results of \cite{BZ}, our results
are non-perturbative, they apply to any choice of the weights, but
our method is specific to the Dirichlet environment. The proof is
remarkably short, and it is quite surprising that for RWDE things
simplify so much. We don't have a clear explanation for this, it
may be related to the correspondence with linearly reinforced
random walks (even if it does not appear in this proof) or from
the relation between RWDE and hypergeometric integrals (cf
\cite{hypergeom}). We think it highlights the special role of
Dirichlet environment and we think that the techniques presented
in this paper will help us to understand more on RWDE.

Our proof is based on an explicit formula valid for Dirichlet
environments and on a method of perturbation of the weights.
Finally, the critical exponent is obtained thanks to an $L_2$
version of the Max-Flow Min-Cut theorem (proposition
\ref{maxflowL2}). The explicit formula (corollary \ref{W}) was in
fact hinted in our joint work with N. Enriquez and O. Zindy
(\cite{ESZ1, ESZ2}) where it appeared that in the case of
one-dimensional RWDE, the Green function on the half-line at 0 has
an explicit law (which is in fact a consequence of a result of
Chamayou and Letac on explicit solutions of renewal equations,
\cite{Letac}).
 \ali

Let us present our results for the graph $\Z^d$. Let $(e_1, \ldots
,e_d)$ be the canonical base of $\Z^d$, and set $e_j=-e_{j-d}$,
for $j=d+1, \ldots ,2d$. The set $\{e_1, \ldots ,e_{2d}\}$ is the
set of unit vectors of $\Z^d$. We consider a probability law
$\lambda$ on
\begin{eqnarray}
\label{T2d}
 \{ (x_1, \ldots ,x_{2d})\in ]0,1]^{2d},\;\;\; \sum_{i=1}^{2d} x_i=1\}
 \end{eqnarray}
 We construct a Markov chain on $\Z^d$, to nearest neighbors as
 follows: we choose independently at
 each point $x\in \Z^d$, some transition probabilities
 $$
 p(x)=\left( p(x,x+e_i)\right)_{i=1,\ldots ,2d},
 $$
 according to the law $\lambda$. It means that the vector
 $( p(x))_{x\in \Z^d}$ is chosen according to the product measure
 $\mu=\otimes_{x\in \Z^d} \lambda$. It defines the transition
 probability of a Markov chain on $\Z^d$, and we denote by $P^{(p)}_x$
 the law of this Markov chain starting from $x$:
 $$
 P^{(p)}_x[ X_{n+1}=x+e_i| X_n=x]=p(x,x+e_i).
 $$

Random Dirichlet environments correspond to the case where the law
$\lambda$ is a Dirichlet law (cf section 2 for details).
More precisely, we choose some
positive weights $(\alpha_1, \ldots ,\alpha_{2d})$ and we take
$\lambda=\lambda^{(\alpha)}$ with density
\begin{eqnarray} \label{simplex}
{\Gamma(\sum_{i=1}^{2d} \alpha_i)\over  \prod_{i=1}^{2d}
\Gamma(\alpha_i)} \left( \prod_{i=1}^{2d} x_i^{\alpha_i-1} \right)
dx_1\cdots dx_{2d-1},
\end{eqnarray}
where $\Gamma$ is the usual Gamma function
$\Gamma(\alpha)=\int_0^\infty t^{\alpha -1} e^{-t} dt$. (In the
previous expression $dx_1\cdots dx_{2d-1}$ represents the image of
the Lebesgue measure on $\R^{2d-1}$ by the application $(x_1,
\ldots, x_{2d-1})\rightarrow (x_1, \ldots, x_{2d-1},
1-(x_1+\cdots+x_{2d-1})$. Obviously, the law does not depends on
the specific role of $x_{2d}$.) We denote by $\P^{(\alpha)}$ the
law obtained on the environment in this way. This type of
environment plays a specific role, since the annealed law
$\P^{(\alpha)}_x[\cdot]=\E^{(\alpha)}[ P^{(p)}_x(\cdot)]$
corresponds to a directed edge reinforced random walk with an
affine reinforcement, i.e.
$$
\P^{(\alpha)}_x [X_{n+1}=X_n+e_i | \sigma(X_k, k\le n)]=
{\alpha_i+N_i(X_n, n)\over \sum_{k=1}^{2d} \alpha_k+N_k(X_n, n)},
$$
where $N_k(x,n)$ is the number of crossings of the directed edge
$(x,x+e_k)$ up to time $n$ (cf \cite{ES1}). When the weights are
constant equal to $\alpha$, the environment is isotropic: when
$\alpha$ is large, the environment is close to the deterministic
environment of the simple random walk, when $\alpha$ is small the
environment is very disordered.

Let us now describe precisely our results for  $\Z^d$, $d\ge 3$.
We denote by $G(x,y)$ the green function in the environment
$(p(x))_{x\in \Z^d}$.
$$
G(x,y)=E_x^{(p)}[\sum_{k=0}^\infty \indic_{X_k=y}].
$$
\begin{theorem}\label{Zd}
For $d\ge 3$ and for any choice of weights $(\alpha_1, \ldots,
\alpha_{2d})$ we have
$$
\E^{(\alpha)}\left( G(0,0)^s\right)<\infty
$$
if and only if $s<\kappa$ where
$$
\kappa=2\sum_{j=1}^{2d} \alpha_{e_j} -\max_{i=1, \ldots,
d}(\alpha_{e_i}+\alpha_{-e_i}).
$$
In particular, the RWDE is transient for almost all environments.
\end{theorem}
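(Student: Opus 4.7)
The plan is to establish matching upper and lower bounds on $\E^{(\alpha)}[G(0,0)^s]$, with the critical threshold $\kappa=2\sum_j \alpha_{e_j}-\max_i(\alpha_{e_i}+\alpha_{-e_i})$. The easier direction shows that $s\ge \kappa$ forces the moment to be infinite. For this, I would use the monotonicity $G(0,0)\ge G_U(0,0)$, where $G_U$ is the Green function of the walk killed on exiting a finite ball $U\ni 0$, and invoke Tournier's moment computation \cite{Tournier} for the finite subgraph. His critical exponent is precisely $\kappa$, and combinatorially it corresponds to the simplest possible trap: a two-cycle between $0$ and the neighbor in the direction that maximizes $\alpha_{e_i}+\alpha_{-e_i}$.

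The bulk of the work is to prove $\E^{(\alpha)}[G(0,0)^s]<\infty$ whenever $s<\kappa$. The first step is to establish the explicit formula (Corollary \ref{W}, announced in the introduction) via elementary scaling identities for the Dirichlet distribution (\ref{simplex}). The heuristic is the following: if $\theta$ is a nonnegative unit flow from $0$ to infinity on $\Z^d$ with $s\theta<\alpha$ edgewise, then translating the weights by $\alpha\mapsto\alpha-s\theta$ produces, through the $\Gamma$-function identities underlying the Dirichlet density, a factor that is essentially $G(0,0)^s$. Re-integrating over the perturbed environment yields an identity expressing $\E^{(\alpha)}[G(0,0)^s]$ as a ratio of $\Gamma$-functions times an annealed expectation in the shifted environment, which one can then bound finitely.

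The second step is to choose the flow $\theta$ so as to push $s$ as close to $\kappa$ as possible under the admissibility constraint $s\theta\le\alpha$, while keeping the identity compatible with an infinite-volume limit. This is the role of the $L^2$ Max-Flow Min-Cut theorem (Proposition \ref{maxflowL2}): applied on $\Z^d$, it provides a unit flow realizing $s$ arbitrarily close to $\kappa$. The hypothesis $d\ge 3$ enters here in two ways: transience of the simple random walk on $\Z^d$ furnishes a unit flow from $0$ to infinity with finite $L^2$ energy (Doyle--Snell, T.~Lyons), and the corresponding min-cut value in $\Z^d$ produces exactly the expression $2\sum_j \alpha_{e_j}-\max_i(\alpha_{e_i}+\alpha_{-e_i})$. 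Almost sure transience of the walk is then immediate, since $\E^{(\alpha)}[G(0,0)^s]<\infty$ for some $s>0$ implies $G(0,0)<\infty$ almost surely.

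The main obstacle I anticipate is isolating the correct $L^2$ Max-Flow Min-Cut statement and matching it with the Dirichlet side: the classical theorem gives only an $L^\infty$ duality, and one needs an $L^2$ analog whose critical cut value agrees with the explicit formula for $\kappa$. An additional subtlety will arise in passing from the finite-volume perturbation identity to its infinite-volume counterpart — one must verify integrability uniformly along an exhaustion of $\Z^d$, which is what the finite $L^2$ energy of the transience flow is meant to supply. If these two ingredients can be calibrated against one another, the theorem drops out with essentially no further input.
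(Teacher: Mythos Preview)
Your architecture matches the paper's --- Tournier for $s\ge\kappa$, a perturbation-of-weights argument for $s<\kappa$, and an $L^2$ Max-Flow Min-Cut to optimize --- but the mechanism you describe for the upper bound has two genuine gaps.

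First, Corollary~\ref{W} is \emph{not} an elementary scaling identity. It rests on Lemma~\ref{reversal}: on a finite strongly connected graph with $\dive(\alpha)\equiv 0$, the time-reversed Dirichlet environment is again Dirichlet with the same weights. This is the key new idea of the paper and requires a nontrivial change of variables (Lemma~\ref{chgt-variables}). From it one deduces that, on a finite graph with cemetery and $\dive(\alpha)\ge\gamma\delta_{x_0}$, the law of $G(x_0,x_0)$ is dominated by $1/W$ with $W$ beta. Your heuristic --- that shifting $\alpha\mapsto\alpha-s\theta$ ``produces a factor that is essentially $G(0,0)^s$'' --- is not what happens: the Radon--Nikodym derivative between two Dirichlet laws is $\prod_e p_e^{\pm\gamma\theta_e}$, a product over edges with no Green function in it. The paper shifts in the \emph{opposite} direction, $\alpha\mapsto\alpha+\gamma\theta$, to create positive divergence $\gamma$ at $x_0$; Corollary~\ref{W} then controls $\E^{(\alpha^\gamma)}[G_N(0,0)^{ps}]$ under the shifted law, and H\"older separates this from the edge-product correction $\prod_e p_e^{-\gamma\theta_e}$. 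The uniform-in-$N$ bound on the latter (via a second-order Taylor expansion of $\ln\Gamma$) is precisely where the finite $L^2$ energy of $\theta$ enters.

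Second, your claim that the min-cut on $\Z^d$ equals $\kappa$ is false. The singleton $K=\{0\}$ is a legitimate cutset with $\alpha(\partial_E\{0\})=\sum_j\alpha_{e_j}$, which for $d\ge 2$ is strictly \emph{smaller} than $\kappa$. The direct combination of Theorem~\ref{integrability} and Proposition~\ref{maxflowL2} therefore only yields integrability for $s<\sum_j\alpha_{e_j}$. To reach $\kappa$ the paper uses an extra step (Case~2 in Section~8): since $(0,0)\notin E$ one has $1=\sum_{\ue=0}p_e$, so it suffices to bound $\E^{(\alpha)}[(p_{e_0}G(0,0))^s]$ for each edge $e_0$ exiting $0$; absorbing $p_{e_0}^s$ into the Dirichlet density adds $s$ to $\alpha_{e_0}$, and rerunning the flow argument with these modified weights excludes cutsets containing $e_0$. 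Intersecting over $e_0$ eliminates the singleton cut, and the remaining minimum is attained at $K=\{0,e_{i_0}\}$, giving $\kappa$.
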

\begin{remark} In \cite{Tournier}, Tournier computed the critical
integrability exponent for RWDE on finite graphs. For $N>1$, let
$G_N$ be the Green function of the RWDE killed when it exists the
ball $B(0,N)$: theorem 2 of \cite{Tournier} implies that
$G_N(0,0)^s$ is integrable if and only if $s<\kappa$. Hence,
$G_N(0,0)$ has no higher integrable moments than the Green
function $G(0,0)$ itself. It seems to mean that there is no
infinite size trap and that the trapping effect comes only from
finite size traps due to the non-uniform ellipticity of the
environment (cf also remark \ref{trapps} after theorem
\ref{t-oriented}).
\end{remark}
\begin{remark}
The result for $\Z^d$ is in fact a consequence of a more general
result valid for directed symmetric graph, under a condition on
the weights (positive divergence), cf theorem \ref{t-oriented}.
\end{remark}
\begin{remark}
This theorem solves, in the special case of Dirichlet
environments, problem 3 stated in Kalikow's paper \cite{Kalikow} :
Kalikow's problem is to prove that all RWRE with i.i.d. elliptic
environments in dimension $d\ge 3$ are transient.
\end{remark}
\begin{remark}
Let us make some comments on previous results on recurrence or
transience of RWRE. In \cite{Lawler}, Lawler proved a CLT for RWRE
with symmetric environments (i.e. almost surely, the environment
is symmetric at each site in each direction) and recurrence on $\Z^2$ and
transience on $\Z^d$, $d\ge 3$, are proved in \cite{Zeitouni}, theorem 3.3.22.
Transience has been proved
by Bolthausen and Zeitouni (\cite{BZ}) in dimension $d\ge 3$ for
small isotropic random perturbations of the simple random walk (in
fact, there is no intersection between the cases treated in
\cite{BZ} and in this paper). In the same paper they also obtain
some estimates on the exit distributions of large balls. In
\cite{SZ}, Sznitman and Zeitouni proved an invariance principal
for diffusions in isotropic environment at low disorder in
dimension $d\ge 3$ (which is the continuous analogue of isotropic
RWRE at low disorder investigated in \cite{Bricmond-Kupianen}) and
obtained transience as a by-product.

In the closely related model of (undirected) edge reinforced
random walks they are very few available results. Recurrence has
been proved on a graph of the type of $\Z^2$ by Merkl and Rolles
(\cite{Merkl1}). On regular trees transience or recurrence depends
on the reinforcement parameter (cf \cite{Pemantle}). On $\Z^d$,
$d\ge 3$, there is, up to my knowledge, no clear conjecture.
\end{remark}
\begin{remark}
The exponent $\kappa$ should play an important role in the
asymptotic behaviour of the RWDE. Indeed, $\kappa$ is related to
the tail of the expected number of visits to the point 0. In
dimension 1 in the transient case, the exponent $\kappa$ defined
as the supremum of the $s>0$ such that $G(0,0)^s$ is integrable is
also the exponent which governs the asymptotic behaviour of the
walk (cf \cite{KKS}). For RWDE, in dimension $d\ge 3$, with
non-balanced weights (i.e. $\alpha_{i+d}\neq \alpha_i$ for a
direction $i=1,\ldots ,d$) this result leads to conjecture that
the RWDE is ballistic if and only if $\kappa>1$. It has been
proved in \cite{Tournier}, proposition 11, that the RWDE has null
velocity when $\kappa\le 1$.
\end{remark}

Let us describe the organization of the paper. In section 2, we
describe the model of RWDE on directed graphs and in section 3 we
state the general results on symmetric graphs. In section 4, we
prove the key explicit formulas. Section 5 is devoted to the proof
of the main result on integrability and section 6 to the
transience part of the result. In section 7, we prove a
generalization of the Max-Flow Min-Cut theorem for flows of finite
energy. In section 8 we apply the results to directed symmetric
graphs and we prove theorem \ref{t-oriented} (ii). Remark that the
proof of transience on $\Z^d$, $d\ge 3$, can be understood by
reading sections 2,4,5 only.

\section{Markov chain in  Dirichlet environment on directed graphs}

The Dirichlet law is the multivariate generalization
of the beta law. The Dirichlet law with parameters
$(\alpha_1, \ldots, \alpha_N)$, $\alpha_i>0$ is the law on the
simplex
$$
\{(p_1, \ldots, p_N)\in [0,1]^N,\; \hbox{ such that $\sum
p_i=1$}\},
$$
with distribution
$$
\left({\Gamma(\sum_{i=1}^N \alpha_i)\over\prod_{i=1}^N
\Gamma(\alpha_i)} \prod_{i=1}^N p_i^{\alpha_i-1}\right) dx_1\cdots
dx_{N-1},
$$
where as previously $dx_1\cdots dx_{N-1}$ represents the image of
the Lebesgue measure on $\R^{N-1}$ by the application $(x_1,
\ldots, x_{N-1})\rightarrow (x_1, \ldots, x_{N-1},
1-(x_1+\cdots+x_{N-1}))$ (which does not depend on the specific
role of $x_N$). The first coordinate of a Dirichlet random
variable with parameters $(\alpha, \beta)$ is by definition a beta
random variable with parameter $(\alpha, \beta)$. The following representation
of Dirichlet ditribution is classical (cf e.g. \cite{Wilks}, page 180): if $(\gamma_1, \ldots, \gamma_N)$ are independent
gamma random variables with parameters $\alpha_1, \ldots, \alpha_N$ then
$({\gamma_1\over \sum \gamma_i}, \ldots,{\gamma_N\over \sum \gamma_i})$ is a
Dirichlet random variable with parameters $(\alpha_1, \ldots, \alpha_N)$.
The following properties are easy consequences of this representation (cf \cite{Wilks} page 179-182).

\noindent {\bf (Associativity)} Let $I_1, \ldots, I_k$ be a
partition of $\{1, \ldots N\}$. Then the random variable
$(\sum_{j\in I_i} p_j)_{i=1, \ldots, k}$ is a Dirichlet random
variable with parameters $(\sum_{j\in I_i} \alpha_j)_{i=1, \ldots,
k}$.

\noindent {\bf (Restriction)} Let $J$ be a non-empty subset of
$\{1, \ldots, N\}$. The random variable $({p_j\over \sum_{i\in J}
p_i})_{j\in J}$ follows a Dirichlet law with parameters
$(\alpha_j)_{j\in J}$ and is independent of $\sum_{i\in J} p_i$
(which follows a beta random variable with parameters $(\sum_J
\alpha_i, \sum_{J^c} \alpha_i)$ by the associativity property.)

Let us first describe Random Walks  in Dirichlet Environment (RWDE
for short) on a general graph. A directed graph is a couple
$G=(V,E)$ where $V$ is the countable set of vertices and $E$ is
the countable set of edges. By definition, to an edge $e$
corresponds a couple of vertices $(\underline e, \overline e)$ :
$\underline e$ and $\overline e$ represent respectively the tail
and the head of the edge $e$.  For convenience we allow multiple
edges and loops (i.e. edges with $\underline e=\overline e)$. We
suppose that the graph has bounded degree i.e. that the number of
edges exiting a vertex $x$ or pointing to a vertex $x$ is bounded.
For an integer $n$ and a vertex $x$ we denote by $B(x,n)$ the ball
with center $x$ and radius $n$ for the graph distance (defined
independently of the orientation of the edges). We say that a
subset $K\subset V$ is strongly connected if for any two vertices
$x$ and $y$ in $K$ there is a directed path in $K$ from $x$ to
$y$.

 Let us define the divergence operator on the graph $G$: it is the
 operator $\dive:\R^E\mapsto \R^V$ defined for a function
 $\theta:E\mapsto \R$ by
 $$
 \dive(\theta)(x)= \sum_{e, \ue=x} \theta(e)-\sum_{e,\oe=x}
\theta(e), \;\;\; \forall x\in V .
$$

The set of environments  on $G$ is defined as the set
$$
\Delta=\{(p_e)_{e\in E}\in ]0,1]^E,\;  \hbox{ such that } \sum_{e,
\ue=x} p_e=1, \; \forall x\in V\hbox{ such that $\{e,\;
\ue=x\}\neq \emptyset$}\}.
$$
With any environment $(p_e)$ we associate the Markov chain on $V$
with law $(P_x^{(p)})$, where $P^{(p)}_{x_0}$ is the law of the
Markov chain starting from $x_0$ with transition probabilities
given by
$$
P^{(p)}_{x_0}(X_{n+1}=y|X_n=x)=p_{(x,y)}=\sum_{e, \ue=x, \oe=y}
p_e, \;\;\; \forall x\neq y.
$$
If $y$ is a vertex such that no edge is exiting $y$ then we put $
P^{(p)}_{x_0} (X_{n+1}=y|X_n=y)=1 $, so that $y$ is an absorbing
point. We denote by $G^{(p)}(x,y)$ the Green function in the
environment $(p)$ defined for $x$ and $y$ in $V$ by
$$
G^{(p)}(x,y)=E^{(p)}_x\left(\sum_{k=0}^\infty
\indic_{X_k=y}\right).
$$
(We often simply write $G(x,y)$ for $G^{(p)}(x,y)$).

Let $(\alpha_e)_{e\in E}$ be a set of positive weights on the
edges. For any vertex $x$ we set
$$
\alpha_x=\sum_{e, \ue=x}\alpha_e
$$
the sum of the weights of the edges with origin $x$.
The Dirichlet environment with parameters $(\alpha_e)$, denoted by
$\P^{(\alpha)}$, is the law on $\Delta$ obtained by taking at each
site $x$ the transition probabilities $(p_e)_{\ue=x}$
independently accordingly to the Dirichlet law with parameters
$(\alpha_e)_{\ue=x}$.


When the graph is finite the distribution of the Dirichlet
environment is given by
\begin{eqnarray}
\label{measure-Delta}
 {\prod_{x\in V} \Gamma(\alpha_x)\over {\prod_{e\in
E} \Gamma(\alpha_e)}} \left( \prod_{e\in E} p_e^{\alpha_e
-1}\right) d\lambda_{\Delta}.
\end{eqnarray}
where $d\lambda_{\Delta}$ is the measure on $\Delta$ given by
\begin{eqnarray*}
d\lambda_{\Delta} =\prod_{e\in \tilde E} dp_e,
\end{eqnarray*}
where $\tilde E$ is obtained from $E$ by removing arbitrarily, for
each vertex $x$, one edge with origin $x$ (easily, it is
independent of this choice)

\begin{remark}
We allow multiple edges for convenience (for the proof of
corollary \ref{W}) but it does not play any role in terms of the
process: indeed, if $G=(V,E)$ with weights $(\alpha_e)_{e\in E}$
has multiple edges we can consider the quotiented graph $\tilde
G=(V, \tilde E)$ with weights $(\tilde\alpha_e)$ obtained by
replacing multiple edges by a unique edge with weight equal to the
sum of the weights of the corresponding edges in $G$. If
$(p_e)_{e\in E}$ is a Dirichlet environment on the graph $G$, the
corresponding environment on the quotiented graph $\tilde G$
obtained by summing the $(p_e)$ of multiple edges is again a
Dirichlet environment on $\tilde G$ with weights
$(\tilde\alpha_e)$. This is due to the associativity property of
Dirichlet laws.
\end{remark}

\section{ Statement of the results on transient symmetric graphs}

For us, a directed symmetric graph will be a directed graph $G$
without multiple edges and such that if $(x,y)\in E$ then
$(y,x)\in E$. To $G$ corresponds a non-directed graph $\overline
G=(V, \overline E)$ where $\{x,y\}\in \overline E$ if and only if
$(x,y)\in E$.

\begin{theorem}\label{t-oriented}
Let $G$ be a connected directed symmetric graph with bounded
degree. Suppose that the weights $(\alpha_e)_{e\in E}$ satisfy

 \indent (H1) there exists $c>0$ and $C>c$ such that $c\le
 \alpha_e\le C$ for all $e$ in $E$.

\indent (H2) For all vertices $x$,  $\dive(\alpha)(x)\ge 0$.

Suppose that the simple random walk on the non-directed graph
$\overline G$ is transient.

\noindent (i) For any $x_0$ in $V$, there exists $\kappa_0>0$ such
that
$$
\E^{(\alpha)}(G(x_0,x_0)^s)<\infty
$$
for all $s<\kappa_0$. In particular, the RWDE on $G$ with
parameter $(\alpha_e)$ is transient for almost all environments.

\noindent (ii) Assume moreover that the following condition on $G$
holds

(H'3) There exists a strictly increasing sequence of integers
$\eta_n$ such that $B(x_0, \eta_{n+1})\setminus B(x_0, \eta_n)$ is
connected in $G$.

Then

$$
\E^{(\alpha)}(G(x_0,x_0)^s)<\infty,\;\hbox{ if and only if
$s<\kappa$}
$$
where if $(x_0,x_0)\not\in E$
\begin{eqnarray*}
\kappa=\min\{\alpha(\partial_E K), \; \hbox{ $K\subset V$ is
finite connected in $\overline G$, $x_0\in K$ and
$K\neq\{x_0\}$,}\}
\end{eqnarray*}
and if $(x_0,x_0)\in E$
\begin{eqnarray*}
\kappa=\min\{\alpha(\partial_E K), \; \hbox{ $K\subset V$ is
finite connected in $\overline G$, $x_0\in K$.}\}
\end{eqnarray*}
with $\partial_E K=\{e\in E,\; \ue\in K, \oe\notin K\}$ and
$\alpha(\partial_E K)=\sum_{e\in \partial_E K} \alpha_e$.
\end{theorem}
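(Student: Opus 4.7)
My approach would treat both parts of the theorem through the same engine: the explicit formula of corollary \ref{W} combined with a perturbation of the Dirichlet weights by a divergence-free flow of finite energy. The two parts differ only in which flow is used.

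For part (i), I would first pass to the finite volume by considering the Green function $G_N(x_0,x_0)$ of the chain killed on exiting $B(x_0,N)$. The idea is that corollary \ref{W} should express $\E^{(\alpha)}[G_N(x_0,x_0)^s]$ as a Dirichlet-type integral with weights shifted from $\alpha_e$ to $\alpha_e - s\theta_e$ for a suitable edge function $\theta$. Hypothesis (H2) lets me absorb this shift without destroying the nonnegativity of $\dive(\alpha-s\theta)$, provided $\theta$ is a unit flow from $x_0$ to infinity (i.e., $\dive(\theta)=\delta_{x_0}-\delta_{\infty}$). Because the simple random walk on $\overline G$ is transient and (H1) makes the weights $\alpha_e$ uniformly comparable to constants, the reversible chain with conductances $\alpha_e$ is also transient, and thus such a unit flow $\theta$ of finite $\ell^2$-energy exists. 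For $s$ small enough, depending on this energy and on the constants $c,C$ in (H1), the perturbed integral is uniformly bounded in $N$. Letting $N\to\infty$ yields a $\kappa_0>0$ such that $\E^{(\alpha)}[G(x_0,x_0)^s]<\infty$ for all $s<\kappa_0$, which forces $G(x_0,x_0)<\infty$ almost surely and hence transience.

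For the lower bound in part (ii), I would fix any admissible finite connected $K\ni x_0$ and use monotonicity $G(x_0,x_0)\ge G_K(x_0,x_0)$ together with Tournier's theorem 2 (recalled in the first remark after theorem \ref{Zd}), which identifies the critical integrability exponent for $G_K(x_0,x_0)^s$ as $\alpha(\partial_E K)$; taking the infimum over $K$ shows the $s$-th moment is infinite for $s\ge\kappa$. For the matching upper bound, I would replace the crude finite-energy flow used in part (i) by a near-optimal one: given $s<\kappa$, proposition \ref{maxflowL2} (the $L^2$ max-flow min-cut) should produce a unit flow $\theta$ from $x_0$ to infinity whose edge capacities are tuned so that $\mathrm{mincut}(x_0,c)$ equals $\kappa$ and whose $\ell^2$-norm against $\alpha_e/s$ is finite. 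Feeding this $\theta$ into the perturbation scheme of part (i) produces uniform bounds on $\E^{(\alpha)}[G_N(x_0,x_0)^s]$, and the proof concludes by $N\to\infty$. Condition (H'3) enters precisely here: the annular exhaustion $B(x_0,\eta_{n+1})\setminus B(x_0,\eta_n)$ being connected allows me to identify the global min-cut in $G$ with the infimum of $\alpha(\partial_E K)$ over finite connected sets $K\ni x_0$, matching the definition of $\kappa$.

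The main obstacle I expect is the construction at criticality, that is, producing for every $s<\kappa$ a unit flow from $x_0$ to infinity that simultaneously respects the capacity constraint dictating the min-cut value and has finite quadratic energy. Classical max-flow min-cut supplies only the flow value, not the $\ell^2$ control that the Dirichlet integral demands; this is exactly the content of proposition \ref{maxflowL2}, and marrying it with the annular exhaustion of (H'3) is where the geometric hypothesis plays an essential role. Once this flow is in hand, the remainder is an algebraic manipulation of the integral from corollary \ref{W}, with (H1) and (H2) used to check that the perturbed weights $\alpha_e-s\theta_e$ remain admissible and that the resulting Dirichlet normalizing constants are uniformly controlled.
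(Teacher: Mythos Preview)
Your overall architecture is right --- finite-volume approximation, corollary \ref{W}, a unit flow of finite energy coming from transience of $\overline G$, and proposition \ref{maxflowL2} plus Tournier for the sharp exponent --- but the core mechanism is misdescribed in a way that would not work as written.

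First, the perturbation goes the other way. Corollary \ref{W} does not express $\E^{(\alpha)}[G_N(x_0,x_0)^s]$ as a Dirichlet integral with shifted weights; it says that if $\dive(\beta)\ge 0$ on $U_N$ and $\dive(\beta)(x_0)\ge\gamma$, then under $\P^{(\beta)}$ the random variable $G_N(x_0,x_0)$ is stochastically dominated by $1/W$ with $W$ a beta variable of first parameter $\gamma$. The original weights $\alpha$ need not have $\dive(\alpha)(x_0)$ large, so the paper \emph{adds} $\gamma\theta$ rather than subtracting $s\theta$: with $\alpha^\gamma=\alpha+\gamma\theta$ one has $\dive(\alpha^\gamma)\ge\gamma\delta_{x_0}$ and corollary \ref{W}(ii) applies under $\P^{(\alpha^\gamma)}$. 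Your proposed check of ``nonnegativity of $\dive(\alpha-s\theta)$'' is in the wrong direction: subtracting the flow \emph{decreases} the divergence at $x_0$, destroying precisely the hypothesis corollary \ref{W} needs.

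Second, the passage from $\P^{(\alpha^\gamma)}$ back to $\P^{(\alpha)}$ is a genuine change of measure with density proportional to $\prod_e p_e^{-\gamma\theta_e}$, and the paper separates $G_N^{ps}$ from this density by H\"older's inequality. The reason the $\ell^2$-energy of $\theta$ is the relevant quantity is specific: when one evaluates $\E^{(\alpha^\gamma)}\bigl[\prod_e p_e^{-q\gamma\theta_e}\bigr]$ together with the Gamma normalizing ratios, the resulting function $\nu(\alpha,u)=\tfrac1q\ln\Gamma(\alpha-(q-1)u)+(1-\tfrac1q)\ln\Gamma(\alpha+u)-\ln\Gamma(\alpha)$ satisfies $\nu(\alpha,0)=\partial_u\nu(\alpha,0)=0$, so the product over $E_N$ is $\exp\bigl(O(\sum_e\theta_e^2)\bigr)$ uniformly in $N$. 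Your sketch invokes finite energy but does not identify this second-order cancellation, without which the bound diverges. One then sends $\gamma\to\infty$ to push the integrability threshold for $W^{-ps}$ up to $\inf_e\alpha_e/\theta_e$; this limit is also absent from your outline.

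For part (ii), your plan for non-integrability via Tournier and for the role of (H'3) through proposition \ref{maxflowL2} is correct. You are, however, missing a step in the integrability direction when $(x_0,x_0)\notin E$: the min-cut for capacities $\alpha$ may be achieved by the trivial cutset $\{e:\ue=x_0\}$, corresponding to $K=\{x_0\}$, which is excluded from the definition of $\kappa$. The paper circumvents this by writing $1=\sum_{\ue=x_0}p_e$, so that $G(x_0,x_0)^s$ is integrable iff each $p_{e_0}^sG(x_0,x_0)^s$ is; the latter is (up to Gamma factors) an expectation under weights $\alpha^{(e_0)}$ obtained by adding $s$ to $\alpha_{e_0}$, for which any cutset containing $e_0$ has sum exceeding $s$. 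This forces the relevant min-cut to avoid at least one edge exiting $x_0$, recovering the constraint $K\neq\{x_0\}$.
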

\begin{remark} An expression for $\kappa_0$ in terms of a
$L_2$-Max-Flow problem is given in the proof, cf formula
\ref{kappa0}.
\end{remark}
\begin{remark}\label{trapps}
The interpretation of $\kappa$ is the following : the finite
subsets $K$ which appear in the infimum should be understood as
finite traps and the value $\alpha(\partial_E K)$ represents the
strength of the trap $K$ : indeed, $\alpha(\partial_E K)$ governs
the tail of the probability that the values of $(p_e)_{e\in
\partial_E(K)}$ are all smaller than $\epsilon$ ; when the exit probabilities $(p_e)_{e\in \partial_E K}$
are small the process spends a long time in the strongly connected
region $K$. In \cite{Tournier}, corollary 4, Tournier computed the
critical integrability exponent for RWDE on finite graphs: (ii)
shows that $\kappa$ is the same as the critical integrability
exponent of the Green function of the RWDE killed when it exits
the ball $B(x_0,N)$, for $N$ such that $B(x_0,N)$ contains the
subset $K$ which realizes the minimum in the expression of
$\kappa$. This suggests that trapping only comes from finite size
traps on transient symmetric graphs that satisfy (H'3).
\end{remark}
\begin{remark}
The difference between the expression of $\kappa$ when there is a
loop at $x_0$ or not comes from the fact that when $(x_0,x_0)\in
E$ the RWDE can be trapped on $\{x_0\}$ but not when
$(x_0,x_0)\not\in E$.
\end{remark}

\begin{corollary}\label{Cayley}
Let $\{e_1, \ldots, e_d\}$ be a finite, symmetric (i.e. the set is
stable by inversion), set of generators of a group and $G$ be the
associated Cayley graph. Let $(\alpha_1, \ldots, \alpha_d)$ be
positive reals. If the simple random walk on the Caley graph is
transient, then the RWDE on the Caley graph $G$, with weights
$(\alpha_{(g,g e_i)}=\alpha_i)$, is transient almost surely.
\end{corollary}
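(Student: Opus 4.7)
The plan is to derive Corollary \ref{Cayley} as a direct application of Theorem \ref{t-oriented}(i); the whole task reduces to verifying the hypotheses of that theorem for the Cayley graph. I will set up $G=(V,E)$ with $V$ the group and $E$ the set of oriented edges $(g, ge_i)$ for $i=1,\ldots,d$, equipped with the constant weights $\alpha_{(g, ge_i)}=\alpha_i$.

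First I would record the structural properties of $G$. Because the generating set $\{e_1,\ldots, e_d\}$ is symmetric, for each $i$ there is $j$ with $e_j=e_i^{-1}$, so $(g,ge_i)\in E$ implies $(ge_i, g)=(ge_i,(ge_i)e_j)\in E$; thus $G$ is directed symmetric. There are no multiple edges (two generators inducing the same edge would be equal), the degree at each vertex equals $d$, and $G$ is connected since the $e_i$ generate the group. The undirected graph $\overline G$ is the standard Cayley graph, on which the simple random walk is transient by hypothesis.

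Next I would verify (H1) and (H2). Condition (H1) holds trivially with $c=\min_i \alpha_i>0$ and $C=\max_i \alpha_i$. For (H2), at any vertex $g$ the outgoing edges $(g,ge_i)$ contribute total weight $\sum_{i=1}^d \alpha_i$, while the incoming edges are the $(ge_i^{-1},g)$, each of weight $\alpha_i$ (the weight attached to the generator $e_i$ traversed); since $e_i\mapsto e_i^{-1}$ is a permutation of the generating set, the incoming total is again $\sum_{i=1}^d \alpha_i$. Hence $\dive(\alpha)(g)=0$ for every $g$, so (H2) holds with equality.

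With all hypotheses of Theorem \ref{t-oriented}(i) in place, the theorem yields the existence of some $\kappa_0>0$ such that $\E^{(\alpha)}(G(x_0,x_0)^s)<\infty$ for all $s<\kappa_0$, and in particular $G(x_0,x_0)<\infty$ almost surely, which is exactly the transience statement of the corollary. There is no real obstacle here: the only non-obvious point is the divergence identity, and it is an immediate consequence of the symmetry of the generating set, so the corollary is essentially a bookkeeping exercise translating the general theorem to the Cayley-graph setting.
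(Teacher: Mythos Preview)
Your proposal is correct and follows the same approach as the paper: verify the hypotheses (H1) and (H2) of Theorem~\ref{t-oriented}(i), the key point being that the symmetry of the generating set makes the incoming and outgoing weight sums at each vertex both equal to $\sum_{i=1}^d \alpha_i$, so $\dive(\alpha)\equiv 0$. The paper's proof is simply a terser version of exactly this computation.
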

\begin{proof}
(H1) is clearly true. For all element of the group $g$,
$\sum_{i=1}^N \alpha_{g,g e_i}= \sum_{i=1}^N \alpha_i=\sum_{i=1}^N
\alpha_{g e_i^{-1}, g}$. Thus (H2) is satisfied.
\end{proof}

 \section{Stability by time reversal}

\subsection{The key lemma}

Suppose now that the graph is finite and strongly connected i.e.
that there is a directed path between any two vertices $x$ and
$y$. Let $\check G=(V, \check E)$ be the graph obtained from $G$
by reversing all the edges, i.e. $\check E$ is obtained from $E$
by reversing the head and the tail of the edges. If $e\in E$ we
denote by $\check e\in \check E$ the reversed edge with tail $\oe$
and head $\ue$. For an environment $(p_e)$, $(\check p_{\check
e})$ denotes the environment obtained by time reversal of the
Markov chain, i.e. for all $e$ in $E$
$$
\check p_{\check e}={\pi_\ue\over \pi_\oe}  p_{e},
$$
where $(\pi_x)_{x\in V}$ is the invariant probability on $V$ for
the Markov chain $P^{(p)}$. (Since we assume that the graph is
strongly connected and that the weights $p_e$ are positive this
invariant probability exists and is unique).
\begin{lemma}\label{reversal}
Let $G=(V,E)$ be a strongly connected finite directed graph.
Suppose that the weights $(\alpha_e)$ have divergence null, i.e.
$$
\dive(\alpha)(x)=0, \;\;\; \forall x\in V,
$$
If $(p_e)_{e\in E}$ is a Dirichlet environment with parameters
$(\alpha_e)_{e\in E}$ then the time reversed environment $(\check
p_e)_{e\in \check E}$ is a Dirichlet environment on $\check G$
with parameters $(\alpha_e)$.
\end{lemma}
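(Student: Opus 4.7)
The plan is to compute the pushforward of $\mu^{(\alpha)}$ under the time-reversal map $\phi\colon\Delta\to\check\Delta$, $\phi(p)_{\check e} = \pi_{\underline e}(p)\,p_e/\pi_{\overline e}(p)$, and identify it with the Dirichlet distribution on $\check\Delta$ with weights $(\alpha_e)$. Since $\pi(p)$ is invariant for both the original chain on $G$ and the reversed chain on $\check G$, the map $\phi$ is an involution, hence a diffeomorphism between the open polytopes $\Delta$ and $\check\Delta$, and the change-of-variables formula applies.

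The normalizing constants $\prod_x\Gamma(\alpha_x)/\prod_e\Gamma(\alpha_e)$ coincide on both sides: zero divergence forces $\alpha_x^{\mathrm{out}} = \alpha_x^{\mathrm{in}}$, and the correspondence $e\leftrightarrow\check e$ preserves edge weights. It thus suffices to compare density factors. Substituting $p_e = \check p_{\check e}\,\pi_{\overline e}/\pi_{\underline e}$ in $\prod_e p_e^{\alpha_e-1}$ and gathering powers of $\pi_x$ vertex by vertex, one finds
\[
\prod_e p_e^{\alpha_e-1} \;=\; \prod_{\check e}\check p_{\check e}^{\alpha_e-1}\cdot\prod_x\pi_x^{-\dive(\alpha)(x)}\cdot\prod_x\pi_x^{d_x^{\mathrm{out}}-d_x^{\mathrm{in}}},
\]
where $d_x^{\mathrm{out}}$ and $d_x^{\mathrm{in}}$ are the out- and in-degrees of $x$. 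The middle product reduces to $1$ by the zero-divergence hypothesis, so the lemma reduces to establishing the Jacobian identity
\[
|\det D\phi| \;=\; \prod_x\pi_x^{d_x^{\mathrm{out}}-d_x^{\mathrm{in}}},
\]
computed against the reference measures $\lambda_\Delta,\lambda_{\check\Delta}$, which would cancel the remaining correction and produce the announced Dirichlet density.

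Proving this Jacobian identity is the principal technical obstacle: $\phi$ depends nonlinearly on $p$ through $\pi(p)$, so a direct term-by-term computation involves cumbersome contributions from $\partial\pi/\partial p$. My approach is to factor $\phi$ through the space $\mathcal F = \{q\in\R_+^E : \dive q = 0,\ \sum_e q_e = 1\}$ of positive divergence-free flows of total mass one, via the intermediate map $\Phi_G\colon p\mapsto q$ with $q_e := \pi_{\underline e}(p)\,p_e$. Stationarity of $\pi$ places $q$ in $\mathcal F$, and because $\pi$ is invariant for $\check p$ as well, the analogous map $\Phi_{\check G}\colon\check\Delta\to\mathcal F$ sends $\check p$ to the same flow, so $\phi = \Phi_{\check G}^{-1}\circ\Phi_G$. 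The Jacobians of $\Phi_G$ and $\Phi_{\check G}$ can be extracted from an augmented bijection: introducing a fictitious scalar $\lambda_x>0$ at each vertex and considering $(p,\lambda)\in\Delta\times\R_+^V\mapsto (\lambda_{\underline e}p_e)\in\R_+^E$, a short block-diagonal computation with one block of size $d_x^{\mathrm{out}}\times d_x^{\mathrm{out}}$ per vertex gives Jacobian $\prod_x\lambda_x^{d_x^{\mathrm{out}}-1}$, and the reversed augmentation yields $\prod_x\check\lambda_x^{d_x^{\mathrm{in}}-1}$. Restricting to the slice $\lambda = \check\lambda = \pi$ and taking the ratio, the intrinsic volume distortions from the codimension-$|V|$ restriction are shared by both sides and cancel, leaving $|\det D\phi| = \prod_x\pi_x^{(d_x^{\mathrm{out}}-1)-(d_x^{\mathrm{in}}-1)} = \prod_x\pi_x^{d_x^{\mathrm{out}}-d_x^{\mathrm{in}}}$, which together with the substitution above completes the proof. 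Verifying the cancellation of the volume-distortion factors on an explicit example (for instance a three-vertex digraph with non-constant invariant measure) provides a useful sanity check.
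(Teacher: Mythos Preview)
Your overall strategy---factor the reversal map $\phi$ through the space of divergence-free edge flows and compare Jacobians---is exactly the paper's approach (there the flow space is normalized by $z_{e_0}=1$ rather than by total mass), and the Jacobian identity $|\det D\phi|=\prod_x\pi_x^{d_x^{\mathrm{out}}-d_x^{\mathrm{in}}}$ you state is correct. The density substitution is also fine. The gap is entirely in the sentence ``the intrinsic volume distortions from the codimension-$|V|$ restriction are shared by both sides and cancel.''

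This cancellation is not a formality; it is the whole content of the lemma. Your augmented map $(p,\lambda)\mapsto(\lambda_{\underline e}p_e)$ has the clean block Jacobian $\prod_x\lambda_x^{d_x^{\mathrm{out}}-1}$, but to pass from that to the Jacobian of $\Phi_G:\Delta\to\mathcal F$ you must restrict to the slice $\{\lambda=\pi(p)\}$, and the correction factor for this restriction involves $\partial\pi/\partial p$. Concretely, $\phi(p)=\check p(p,\pi(p))$, so $D\phi=\partial_p\check p+(\partial_\lambda\check p)\,D\pi$, and the second term does not vanish. If one carries the computation through (as the paper does in Lemma~\ref{chgt-variables}, proved in the appendix via a Fourier-transform change of variables), the extra factor that appears in the Jacobian of $\Phi_G$ is $\det\bigl((I-P)_{|U\times U}\bigr)$, equivalently $\det(Z_{|U\times U})$ in the flow variables. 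The cancellation you assert then amounts to $\det(Z_{|U\times U})=\det(\check Z_{|U\times U})$, which holds because $\check Z=Z^T$---a genuine fact tied to the matrix-tree theorem, not a tautology about ``shared'' restrictions. Your sanity-check example will confirm the final formula but cannot substitute for this step. To complete your argument you would have to actually compute the Jacobian of $\Phi_G$ and exhibit this determinant; at that point you have reproduced the paper's Lemma~\ref{chgt-variables}.
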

\begin{remark} By this we mean that $(\check p_e)$ is distributed
according to a Dirichlet environment with parameters $(\check
\alpha_{\check e\in \check E})$ where $\check \alpha_{\check
e}=\alpha_e$ if $\check e$ is the reversed edge of $e$. We will
often identify the edges in $E$ with their reversed edges in
$\check E$.
\end{remark}
\begin{proof} Two proofs are available for this lemma. The
original proof given in this paper is analytic and based on a
change of variable. Later, in collaboration with L. Tournier, we
obtained a shorter probabilistic proof, cf \cite{Sabot-Tournier}.
The analytic proof has nevertheless an interest since it leads to
an interesting distribution on the space of occupation density (cf
lemma  and remark below).

Let $e_0$ be a specified edge of the graph. Let $\hhh_{e_0}$ be
the affine space defined by
$$
\hhh_{e_0}=\{ (z_e)_{e\in E}\in \R^E, \;\; z_{e_0}=1, \;\;
\dive(z)\equiv 0\}.
$$
and $\hhh$ the vector space
$$
\hhh=\{ (z_e)_{e\in E}\in \R^E, \;\;  \dive(z)\equiv 0\}.
$$
Let $\tilde \Delta_{e_0}=\hhh_{e_0}\cap(\R_+^*)^E$.
 The strategy is to make the change of
variable
\begin{eqnarray*}
\Delta&\mapsto &\tilde\Delta_{e_0}
\\
(p_e)_{e\in E}&\to &(z_e={\pi_{\ue}p_e\over
\pi_{\ue_0}p_{e_0}})_{e\in E}.
\end{eqnarray*}
Hence, $(z_e)$ is the occupation time of the graph normalized so
that $z_{e_0}=1$. It is easy to see that the previous change of
variable is a $C^\infty$-diffeomorphism. Let $T$ be a spanning
tree of the graph $G$ such that $e_0\notin T$. (This is possible
since the graph is strongly connected and thus $e_0$ belongs to at
least one directed cycle of the graph.) We denote by
$B=T\cup\{e_0\}$. Then $(z_e)_{e\in T^c}$ is a base of $\hhh$ and
$(z_e)_{e\in B^c}$ is a base of $\hhh_{e_0}$. Let $x_0\in V$ be
any vertex, and set $U=V\setminus\{x_0\}$. We need the following
lemma.
\begin{lemma}\label{chgt-variables}
Let $\psi$ be a positive test function on $\Delta$. Then
$$
\int_\Delta \Psi((p_e)) \left( \prod_{e\in
E}p_e^{\alpha_e-1}\right)  d\lambda_\Delta= \int_{\tilde
\Delta_{e_0}} \Psi(({z_e\over z_{\ue}})) \left( {\prod_{e\in E}
z_e^{\alpha_e-1} \over \prod_{x\in V} z_x^{\alpha_x}}\right)
\det\left( Z_{|U\times U}\right) \prod_{e\in B^c} dz_e.
$$
where as usual $z_x=\sum_{e,\ue=x} z_e$ and $Z$ is the $V\times V$
matrix defined by
$$
Z_{x,x}= z_x, \; \forall x\in V, \;\;\;
Z_{x,y}=-z_{x,y}=-\sum_{e,\ue=x,\oe=y} z_e, \;\; \forall x\neq y.
$$
\end{lemma}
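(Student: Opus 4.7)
The strategy is a direct change of variables built around the diffeomorphism $\Phi:\Delta\to\tilde\Delta_{e_0}$, $(p_e)\mapsto(z_e=\pi_{\ue}p_e/(\pi_{\ue_0}p_{e_0}))$, with inverse $p_e=z_e/z_{\ue}$, where $z_x=\sum_{e:\ue=x}z_e$. Pure substitution into the integrand yields, using $\sum_{e:\ue=x}(\alpha_e-1)=\alpha_x-d_x^+$ with $d_x^+:=|\{e\in E:\ue=x\}|$,
\[
\prod_e p_e^{\alpha_e-1}=\frac{\prod_e z_e^{\alpha_e-1}}{\prod_x z_x^{\alpha_x-d_x^+}},
\]
so the lemma reduces to the Jacobian identity
\[
\prod_{e\in\tilde E}dp_e=\frac{\det(Z_{|U\times U})}{\prod_x z_x^{d_x^+}}\,\prod_{e\in B^c}dz_e
\]
(taking $x_0=\ue_0$ for concreteness).

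To prove this identity I would embed both parameterizations into $\R_+^E$ with its Lebesgue volume $\prod_e dz_e$ and compute it in two different ways. On the $\Delta$-side, the diffeomorphism $((p_e)_{e\in\tilde E},(z_x)_{x\in V})\mapsto(z_e=z_{\ue}p_e)_{e\in E}$ has a Jacobian which is block-diagonal over the vertices: the $d_x^+\times d_x^+$ block at each vertex $x$ has determinant $\pm z_x^{d_x^+-1}$ (elementary row operations), giving
\[
\prod_e dz_e=\Bigl(\prod_x z_x^{d_x^+-1}\Bigr)\prod_{e\in\tilde E}dp_e\,\prod_x dz_x.
\]
On the $\tilde\Delta_{e_0}$-side, the linear change of coordinates on $\R^E$ from $(z_e)_{e\in E}$ to $((z_e)_{e\in B^c},z_{e_0},(\dive(z)(x))_{x\in U})$ has Jacobian $\pm\det(D_{|U\times T})$, a minor of the incidence matrix of $G$, which equals $\pm1$ since $T$ is a spanning tree (total unimodularity).

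It remains, at fixed $p$, to relate $\prod_x dz_x$ to $dz_{e_0}\prod_{x\in U}d\dive(z)(x)$. A direct computation shows that the matrix of $(z_x)_{x\in V}\mapsto(z_{e_0},\dive(z)|_U)$ is block-triangular, with a single $1\times1$ entry $p_{e_0}=1/z_{x_0}$ and complementary $U\times U$ block equal to $(I-P^T)_{|U\times U}$, where $P_{x,y}=p_{x,y}$ is the transition kernel. Using the factorization $Z=\mathrm{diag}(z_x)(I-P)$ one obtains
\[
\det\bigl((I-P^T)_{|U\times U}\bigr)=\frac{\det(Z_{|U\times U})}{\prod_{x\in U}z_x},
\]
so the full $(z_x)\mapsto(z_{e_0},\dive|_U)$ Jacobian equals $\det(Z_{|U\times U})/\prod_{x\in V}z_x$. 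Inverting, substituting into the two $\prod_e dz_e$ identities above, and combining with the integrand substitution gives the claim. The main obstacle is this last step: one must recognize that the $(I-P^T)_{|U\times U}$ block appearing here is essentially the $x_0$-cofactor of the Laplacian-type matrix $Z$ after the diagonal rescaling has been factored out, and then verify that the $z_x$-exponent bookkeeping matches $\prod_x z_x^{d_x^+}$ exactly.
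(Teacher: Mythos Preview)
Your proposal is correct and rests on precisely the same three Jacobian computations that the paper carries out: (i) the block-diagonal Jacobian $\prod_x z_x^{d_x^+-1}$ for $((p_e)_{\tilde E},(z_x)_V)\mapsto(z_e)_E$, (ii) the $\pm1$ Jacobian for $(z_e)_E\mapsto((z_e)_{B^c},z_{e_0},(h_x)_U)$ via the spanning-tree incidence minor, and (iii) the Jacobian $p_{e_0}\det(I-P)_{|U\times U}=\det(Z_{|U\times U})/\prod_{x\in V}z_x$ for $(z_x)_V\mapsto(z_{e_0},(h_x)_U)$ at fixed $p$. The paper, however, packages these computations inside a Fourier-transform framework: it represents $\int_{\tilde\Delta_{e_0}}$ as an oscillatory integral over $\R^E\times\R\times\R^U$, using the Fourier representation of the delta constraints $z_{e_0}=1$, $\dive(z)|_U=0$, then performs (i) and (iii) inside that integral before integrating the dual variables back out. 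Your route is more elementary in that it dispenses with the Fourier machinery, replacing it by the observation that the two parameterizations $((z_e)_{B^c},z_{e_0},(h_x)_U)$ and $((p_e)_{\tilde E},z_{e_0},(h_x)_U)$ of $\R^E$ share their last $|V|$ coordinates, so that the full change-of-variables Jacobian between them coincides with the Jacobian of the restricted map $\Phi^{-1}:\tilde\Delta_{e_0}\to\Delta$ on the slice $\{z_{e_0}=1,\ h=0\}$. That block-triangularity is exactly what legitimizes your final ``substituting and combining'' step, and it would be worth stating explicitly; once said, your argument is complete and arguably cleaner than the paper's.
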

\begin{remark}  This formula is essentially the same as the one which
gives the correspondence between RWDE and hypergeometric integrals
associated with an arrangement of hyperplanes in \cite{hypergeom}.
\end{remark}
\begin{remark}
This lemma expresses the law of the random environment in the
variables $(z_e)$ which correspond to the occupation densities of
the edges (properly renormalized). We can remark that this formula
is reminiscent of the distribution discovered by Diaconis and
Coppersmith
(\cite{Coppersmith-Diaconis,Diaconis,Keane-Rolles,Diaconis-Rolles})
which expresses edge-reinforced random walk as a mixture of
reversible Markov chains.
\end{remark}

We see that lemma \ref{reversal} is a direct consequence of the
previous result. Indeed we see that lemma \ref{chgt-variables}
applied to the reversed graph $(\check G, \check E)$, starting
with the weights $\check\alpha_{\check e}=\alpha_e$ gives the same
integrand with $\alpha_x$ replaced by $\oa_x=\sum_{e,
\oe=x}\alpha_e$. The two coincide after the change of variables
exactly when $\dive(\alpha)\equiv 0$.
\end{proof}

The proof of lemma \ref{chgt-variables} needs some lengthy
computation and is deferred to the appendix in section 8.

\subsection{Applications}
Consider now a finite graph with a cemetery point, i.e. we suppose
that $G=(V,E)$, that $V$ and $E$ are  finite and that $V$ can be
written $V=U\cup\{\delta\}$ where
 \begin{itemize}
\item no edge is exiting $\delta$ ($\delta$ is called the cemetery
point)
 \item for any point $x$ in $U$ there is a directed path from $x$ to
$\delta$.
 \end{itemize}
It means that $\delta$ is absorbing for the Markov chain on $G$
with law $P^{(p)}$. For $x$ and $y$ in $U$ we denote by
$G^{(p)}(x,y)$ the Green function in the environment $(p_e)$
$$
G^{(p)}(x,y)=E^{(p)}_x(\sum_{k=0}^\infty \indic_{X_k=y}) .
$$
\begin{corollary}\label{W}

(i) Suppose that $\dive(\alpha)(x)=0$ for all $x$ in $U$ such that
$x\neq x_0$ then $\dive(\alpha)(x_0)>0$ and $G^{(p)}(x_0,x_0)$ is
distributed as ${1\over W}$ where $W$ is a beta random variable
with parameter $(\dive(\alpha)(x_0),
\alpha_{x_0}-\dive(\alpha)(x_0))$.

(ii) Suppose that $\dive(\alpha)(x)$ is non-negative for all $x$
in $U$. Let $x_0$ be a vertex in $U$ such that
$\dive(\alpha)(x_0)\ge \gamma>0$, then $G^{(p)}(x_0,x_0)$ is
stochastically dominated by ${1\over W}$ where $W$ is a beta
random variable with parameter $(\gamma, \alpha_{x_0}-\gamma)$.
\end{corollary}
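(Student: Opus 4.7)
My plan is to derive both parts of the corollary from a single application of Lemma~\ref{reversal} to a divergence-free augmentation $\tilde G$ of $G$, reading off the distribution (respectively a pathwise bound) of $G^{(p)}(x_0,x_0)$ from the reversed Dirichlet environment.

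For each $x\in U$ with $\dive(\alpha)(x)>0$, I would add an edge $e_x:\delta\to x$ of weight $\dive(\alpha)(x)$, write $e^*=e_{x_0}$, and call the enlarged graph $\tilde G=(V,\tilde E)$. The identity $\sum_{x\in V}\dive(\alpha)(x)=0$ together with $\dive(\alpha)\ge 0$ on $U$ forces the augmented weights to have zero divergence everywhere: at each $x\in U$ the new in-weight absorbs the old excess, and the new out-weight $\sum_{x\in U}\dive(\alpha)(x)$ at $\delta$ equals its original in-weight $-\dive(\alpha)(\delta)$. A Dirichlet environment $(\tilde p_e)_{e\in\tilde E}$ on $\tilde G$ restricts on $E$ to the original Dirichlet environment on $G$, so I may work throughout on $\tilde G$ and then read off the original Green function.

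Lemma~\ref{reversal} then gives a time-reversed Dirichlet environment $(\check p_{\check e})$ on $\check{\tilde G}$ with the same weights. At $x_0$ the total out-weight in $\check{\tilde G}$ equals the in-weight of $x_0$ in $\tilde G$, namely $\sum_{e\in E,\,\oe=x_0}\alpha_e+\dive(\alpha)(x_0)=\alpha_{x_0}$, and $\check e^*$ carries weight $\dive(\alpha)(x_0)$, so the Dirichlet restriction property yields
$$
\check p_{\check e^*}\sim\mathrm{Beta}\bigl(\dive(\alpha)(x_0),\,\alpha_{x_0}-\dive(\alpha)(x_0)\bigr).
$$
On the other hand, the reversal formula gives $\check p_{\check e^*}=\pi_\delta\tilde p_{e^*}/\pi_{x_0}$ for $(\pi_x)$ the invariant probability of the $\tilde G$-chain. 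Since the $\tilde G$-chain agrees with the $G$-chain until absorption at $\delta$, a first-step analysis at $\delta$ combined with the ergodic identity $\pi_{x_0}/\pi_\delta=E_\delta[N_{x_0}]$ yields
$$
\frac{\pi_{x_0}}{\pi_\delta}=\sum_{x\in U}\tilde p_{e_x}\,G^{(p)}(x,x_0),
$$
with the convention $\tilde p_{e_x}=0$ when no new edge was added at $x$. Combining,
$$
\check p_{\check e^*}=\frac{\tilde p_{e^*}}{\sum_{x\in U}\tilde p_{e_x}\,G^{(p)}(x,x_0)}\le\frac{1}{G^{(p)}(x_0,x_0)}\quad\text{a.s.,}
$$
the inequality obtained by keeping only the $x=x_0$ term in the denominator.

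Under the hypothesis of (i) the only added edge is $e^*$, so $\tilde p_{e^*}=1$ almost surely and the last display becomes an equality, yielding $G^{(p)}(x_0,x_0)=1/W$ with $W$ as claimed (positivity $\dive(\alpha)(x_0)>0$ follows from $\sum_{x\in V}\dive(\alpha)(x)=0$ together with the existence of at least one edge into $\delta$). Under (ii) the same display gives the pathwise bound $G^{(p)}(x_0,x_0)\le 1/\check p_{\check e^*}$, hence stochastic domination by $1/\mathrm{Beta}(\dive(\alpha)(x_0),\alpha_{x_0}-\dive(\alpha)(x_0))$; the monotone likelihood ratio of the Beta family in its first parameter, together with $\dive(\alpha)(x_0)\ge\gamma$, upgrades this to the desired domination by $1/\mathrm{Beta}(\gamma,\alpha_{x_0}-\gamma)$. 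The main technical point I expect is the strong-connectivity assumption required to invoke Lemma~\ref{reversal}: when some $x\in U\setminus\{x_0\}$ has $\dive(\alpha)(x)=0$ and is not otherwise reachable from $\delta$ in $\tilde G$, one either restricts to the strongly connected component of $x_0$ (on which $G^{(p)}(x_0,x_0)$ is unchanged) or adds auxiliary edges of weight $\varepsilon$ and sends $\varepsilon\to 0$.
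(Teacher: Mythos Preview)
Your proof is correct. For part (i) it is essentially the paper's argument in a slightly different packaging: the paper identifies $\delta$ with $x_0$ and reads off $1/G(x_0,x_0)=\sum_{\oe=\delta}\check p_{\check e}$ via a path-reversal identity, whereas you add the single edge $\delta\to x_0$ and use the cycle formula $\pi_{x_0}/\pi_\delta=G^{(p)}(x_0,x_0)$; both constructions produce the same Beta marginal.

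For part (ii) the routes genuinely differ. The paper introduces a \emph{new} cemetery point $\tilde\delta$, sends weight $\gamma$ from $\delta$ to $\tilde\delta$ and the remaining positive divergence back into $U$, so that the enlarged graph satisfies the hypothesis of part (i) with $\dive(\tilde\alpha)(x_0)=\gamma$ exactly; the bound then follows from the pathwise comparison $G^{(p)}(x_0,x_0)\le G^{(\tilde p)}(x_0,x_0)$ together with part (i). Your approach instead stays with a single divergence-free $\tilde G$, applies Lemma~\ref{reversal} once, extracts the pathwise bound $G^{(p)}(x_0,x_0)\le 1/\check p_{\check e^*}$, and then invokes stochastic monotonicity of $\mathrm{Beta}(a,\alpha_{x_0}-a)$ in $a$ to pass from $a=\dive(\alpha)(x_0)$ down to $a=\gamma$. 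The paper's reduction avoids the monotonicity step; your approach avoids the second graph construction and treats (i) and (ii) uniformly.

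One remark: your closing caveat about strong connectivity is unnecessary. Since $\dive_{\tilde G}(\tilde\alpha)\equiv 0$, if $A\subset V$ denotes the set of vertices not reachable from $x_0$ in $\tilde G$, then no edge enters $A$, so $0=\sum_{z\in A}\dive_{\tilde G}(\tilde\alpha)(z)$ equals the total weight of edges leaving $A$; hence no edge leaves $A$ either. But every $z\in A\subset U$ reaches $\delta$, and $\delta\notin A$ (as $x_0$ reaches $\delta$), forcing $A=\emptyset$. Thus $\tilde G$ is automatically strongly connected in both parts, and neither the restriction to a component nor the $\varepsilon$-perturbation is needed.
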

\begin{remark} The explicit formula in (i) was in fact suggested by our
joint work with N. Enriquez and O. Zindy (\cite{ESZ1}) and also by
the correspondence established in \cite{hypergeom}. It appeared in
\cite{ESZ1} that in the case of sub-ballistic one-dimensional RWRE
limit theorems are fully explicit in the case of Dirichlet
environments. This is a consequence of the fact that the Green
function at 0 of the RWDE on the half line $\Z_+$ is equal in law
to $1/W$ where $W$ is a beta random variable with appropriate
weights. In the case of one-dimensional RWDE, this explicit
formula is a consequence of a result of Chamayou and Letac
(\cite{Letac}) on explicit solutions for renewal equations. From
the point of view of the correspondence with hypergeometric
integrals described in \cite{hypergeom}, the condition of null
divergence corresponds to a condition of resonance of the weights
of the arrangement. Resonant arrangements are not well understood
yet.

\end{remark}
 \begin{proof} (of corollary \ref{W})
(i) We can freely suppose that any $y$ in $U$ can be reached
following a directed path from $x_0$ (indeed, the part of the
graph which cannot be reached from $x_0$ does not play any role in
$G(x_0,x_0)$). Suppose that $\dive(\alpha)(x_0)=\gamma$,
$\gamma>0$. It means that $\dive(\alpha)(\delta)=-\gamma$.
Consider now the graph $\tilde G=(U,E)$ obtained by identification
of the vertices $\delta$ and $x_0$. The edges of $\tilde G$ are
just obtained by identification of $\delta$ and $x_0$ in the edges
of $G$ (with possibly creation of multiples edges and loops in
$\tilde G$), and we denote by $\tilde x_0$ the point corresponding to
the indentification of $\delta$ and $x_0$.
The graph $\tilde G$ is clearly strongly connected
and if we keep the same weights on the edges we have
$\dive^{\tilde G}(\alpha)\equiv 0$. Consider the invariant
probability $(\pi_x)_{x\in U}$ for the RWDE on $\tilde G$. It
gives the occupation time on the edges $z_e=\pi_{\ue} p_e$, and
the time reversal transition probabilities $\check p_e={z_e\over
\pi_{\oe}}$. If $G^{(p)}(x_0,x_0)$ is the Green function on the
graph $G$ (which is the graph with the cemetery point)
$$
G^{(p)}(x_0,x_0)={1\over \sum_{\oe =\delta} \check p_{\check e}}.
$$
(where $\oe=\delta$ is relative to the edges in the initial graph
$G$). Indeed, we have
$$
{1\over G^{(p)}(x_0,x_0)} = \sum_{\sigma \in \Sigma} p_\sigma,
$$
where $\Sigma$ is the set of direct paths $\sigma$ in $G$ from
$x_0$ to $\delta$ i.e. $\sigma=(e_0, \ldots , e_{n-1})$ with
$\overline e_{i-1}=\underline e_{i}$ for $i=1, \ldots , n-1$ and
$\underline e_0=x_0$, $\overline e_{n-1}=\delta$ and $\underline
e_i\not\in \{x_0, \delta\}$ for $i=1, \ldots , n-1$. Since in
$\tilde G$ $\sigma\in \Sigma$ is a cycle from $\tilde x_0$ to $\tilde x_0$, we
have
$$
{1\over G^{(p)}(x_0,x_0)} = \sum_{\sigma \in \Sigma} \check
p_{\check \sigma}.
$$
where $\check \sigma$ is the returned path $(\check e_{n-1},
\ldots, \check e_0)$ if $\sigma=(e_0, \ldots, e_{n-1})$. But since
there is no edge exiting $\delta$ in the graph $G$ we have
$$
\sum_{\sigma \in \Sigma} \check p_{\check \sigma}=\sum_{\oe
=\delta} \check p_{\check e}.
$$

By the previous lemma we know that $(\check p_e)_{\oe=\delta
\hbox{ or } \oe=x_0}$ has the law of  a Dirichlet random variable
with parameters $(\alpha_e)_{{\oe=\delta \hbox{ or } \oe=x_0}}$.
(Indeed, the vertices $\delta$ and $x_0$ are identified in the
quotiented graph $\tilde G$.) The conclusion comes from the
associativity property of Dirichlet random variables (cf section
2).

(ii) Consider the graph $\tilde G=(\tilde U\cup\{\tilde
\delta\},\tilde E)$ obtained as follows. The set of vertices is
defined by $\tilde U=U\cup\{\delta\}$ and $\tilde \delta$ is the
new cemetery point. The set of edges $\tilde E$ is obtained by
adding to the edges $E$ of $G$ some new edges with origin
$\delta$: for $x\neq x_0$ in $U$ such that $\dive(\alpha)(x)>0$ we
add the edge $(\delta, x)$ with weight $\dive(\alpha)(x)$; for
$x_0$, if $\dive(\alpha)(x_0)>\gamma$ we put a new edge
$(\delta,x_0)$ with weight $\dive(\alpha)(x_0)-\gamma$; we also
add the edge $(\delta, \tilde \delta)$ with weight $\gamma$.
Clearly, the new graph $\tilde G$ with the previous choice of
weights (denoted by $\tilde \alpha$) satisfies the condition of
(i) since $\dive(\tilde \alpha)_{|\tilde U}=\gamma \delta_{x_0}$.
Moreover if $(p_e)_{e\in E}$ is a Dirichlet environment on $G$
with weights $(\alpha_e)$ it can be extended to a Dirichlet
environment $(\tilde p_e)_{e\in \tilde E}$ on $\tilde G$ just by
choosing independently the transition probabilities on the edges
exiting $\delta$ according to a Dirichlet random variable with the
appropriate weights. Remark that
$$
G^{(p)}(x_0,x_0)\le G^{(\tilde p)}(x_0,x_0).
$$
Indeed, the Markov chains on $G$ and $\tilde G$ behave the same as
long as they are on $U$ but the Markov chain on $G$ is stuck on
$\delta$ although the Markov chain on $\tilde G$ can come back to
$U$ from $\delta$. The conclusion is a consequence of (i) since
(i) implies that $G^{(\tilde p)}(x_0,x_0)$ has the law of $1/W$
with $W$ a beta random variable with weights $(\gamma,
\alpha_{x_0}-\gamma)$.
\end{proof}

\section{Integrability condition}
Suppose now that $G=(V,E)$ is a countable connected directed graph
with bounded degree. Suppose for simplicity that there is at least
one edge exiting each vertex.

We recall that a flow from a vertex $x_0$ to infinity (cf
\cite{PL}) is a positive function $\theta$ on the edges such that
$$
\dive(\theta)(x)=0, \;\;\; \forall x\neq x_0,
$$
and
$$
\dive(\theta)(x_0)\ge 0.
$$
The strength of the flow is the value
$$
\strength(\theta)=\dive(\theta)(x_0).
$$
A unit flow is a flow of strength  1.

We say that the flow $\theta$ has finite energy if it is square
integrable i.e. if
$$
\sum_{e\in E} \theta_e^2 <\infty.
$$
\begin{theorem}
\label{integrability} Let $(\alpha_e)_{e\in E}$ be a family of
positive weights on the edges which satisfy

 \indent (H1) there exists $c>0$ and $C\ge c$ such that $c\le
 \alpha_e\le C$ for all $e$ in $E$.

\indent (H2) For all vertices $x$,  $\dive(\alpha)(x)\ge 0$.

Suppose that $\theta$ is a unit flow with finite energy from $x_0$
to infinity then
$$
\E^{(\alpha)}\left( G(x_0,x_0)^s\right)<\infty
$$
as soon as
$$
s <\inf_{e\in E} {\alpha_e\over \theta_e}
$$
\end{theorem}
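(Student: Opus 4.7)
The plan combines a finite-volume approximation, a perturbation of the weights along the flow $\theta$, and the explicit Beta-distribution bound of Corollary \ref{W}(ii). I begin by truncating to $G_n=B(x_0, n)$, with all edges exiting the ball redirected to a cemetery $\delta$; by monotone convergence it suffices to bound $\E^{(\alpha)}[G_n(x_0,x_0)^s]$ uniformly in $n$. The restriction $\theta^{(n)}$ of $\theta$ to $G_n$ is still a unit flow from $x_0$ to $\delta$.

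Given $s<M:=\inf_e \alpha_e/\theta_e$, fix small $\epsilon,\eta>0$ and introduce the \emph{tilted weights} $\alpha':=\alpha+s'\theta^{(n)}$ on $G_n$, with $s'\ge 0$ chosen so that $\dive(\alpha')(x_0)=\dive(\alpha)(x_0)+s'\ge s+\epsilon$. The divergence hypothesis (H2) is preserved by the tilt, so Corollary \ref{W}(ii) gives $\E^{(\alpha')}[G_n(x_0,x_0)^{s(1+\eta)}]\le \E[W^{-s(1+\eta)}]$ for $W\sim\mathrm{Beta}(s+\epsilon,\alpha'_{x_0}-s-\epsilon)$, a bound independent of $n$ as long as $s(1+\eta)<s+\epsilon$. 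The Radon--Nikodym derivative is $D_n:=d\P^{(\alpha)}/d\P^{(\alpha')}=(Z_\alpha/Z_{\alpha'})\prod_e p_e^{-s'\theta^{(n)}_e}$ (with $Z_\alpha,Z_{\alpha'}$ the Dirichlet normalising constants), and H\"older with conjugate exponents $p=1+\eta$, $q=(1+\eta)/\eta$ yields
$$ \E^{(\alpha)}[G_n^s]\le \E^{(\alpha')}[G_n^{s(1+\eta)}]^{1/(1+\eta)}\cdot \E^{(\alpha')}[D_n^q]^{1/q}. $$

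The crucial step, and the main technical point, is to bound $\E^{(\alpha')}[D_n^q]$ uniformly in $n$. Written out via the Dirichlet density, it factorises over vertices of $G_n$ into a product of local Gamma-function ratios in the weights $(\alpha_e)_{\ue=x}$ and the flow values $(\theta^{(n)}_e)_{\ue=x}$, with combined shift parameter $\tau:=(q-1)s'=s'/\eta$. Expanding the logarithm of each vertex factor to second order in $s'\theta_e^{(n)}$, the \emph{first-order terms cancel exactly} thanks to the algebraic identity $\tau=(q-1)s'$ that encodes the H\"older book-keeping; the remaining \emph{quadratic terms} are controlled by $O\bigl((s')^2(\sum_e\theta_e^2+\sum_x(\theta^{\mathrm{out}}_x)^2)\bigr)\lesssim (s')^2(1+\Delta)\sum_e\theta_e^2$, which is finite by the \emph{finite-energy} hypothesis ($\Delta$ the maximum degree). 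Elementary algebra shows that the H\"older constraints $s(1+\eta)<s+\epsilon$ (from the Beta moment) and $\tau<M$ (from the Dirichlet moments) admit a compatible triple $(\epsilon,\eta,s')$ exactly when $s<M$. Passing to the limit $n\to\infty$ via monotone convergence then closes the proof; the finite-energy assumption enters precisely at this cancellation-plus-quadratic-bound step, which is the crux of the argument.
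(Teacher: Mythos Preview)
Your proposal is correct and follows essentially the same route as the paper: finite-volume truncation, tilting the weights along the flow (your $\alpha'=\alpha+s'\theta$ is the paper's $\alpha^\gamma=\alpha+\gamma\theta$), H\"older's inequality, the Beta bound from Corollary~\ref{W}(ii), and the crucial observation that in the Gamma-function ratio the first-order terms vanish (the paper phrases this as $\partial_u\nu(\alpha,0)=0$), leaving a quadratic remainder controlled by the finite energy of $\theta$. The only cosmetic difference is the final optimization: the paper fixes the H\"older exponents, obtains $s<\gamma\inf_e(\alpha_e/\theta_e)/(\gamma+\inf_e(\alpha_e/\theta_e))$, and lets $\gamma\to\infty$, whereas you solve directly for a compatible triple $(\epsilon,\eta,s')$; these are equivalent parameterizations of the same inequality.
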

\begin{proof}
Let $\gamma$ be a positive real. We define the weights
$$\alpha^\gamma=\alpha+\gamma \theta.
$$
 We clearly have
$$
\dive(\alpha^\gamma)\ge \gamma\delta_{x_0}.
$$
For a positive integer $N$, let $U_N$ be the ball with center
$x_0$ and radius $N$ in $G$. We define the graph
$G_N=(U_N\cup\{\delta\},E_N)$ as follows. We contract all the
vertices of $(U_N)^c$ to the cemetery point $\delta$. The edges
$E_N$ are obtained from $E$ as follows: in $E$ we delete all the
edges exiting a point of $U_N^c$ and we define $E_N$ from the
remaining edges by contraction of $U_N^c$ to the single vertex
$\delta$. By the bounded degree property we see that $G_N$ is a
finite graph. We keep the same weights on the edges and we see
that on the graph $G_N$ we have for $x$ in $U_N$
$$
\dive^N(\alpha^\gamma)(x)=\sum_{e\in E, \ue=x}
\alpha^\gamma_e-\sum_{e\in E,\oe=x, \ue\in U_N} \alpha^\gamma_e\ge
\dive(\alpha^\gamma)(x)
$$
(With $\dive^N$ the divergence operator on the graph $G_N$). Hence
$\dive^N(\alpha^\gamma)\ge 0$, and
$$
\dive^N(\alpha^\gamma)(x_0)\ge \gamma.
$$
Denote by $G^{(p)}_N(x_0,x_0)$ the Green function of the Markov
chain killed when it exits $U_N$. From corollary \ref{W} (ii) we see
that under $\P^{(\alpha^\gamma)}$ the Green function
$G_N^{(p)}(x_0,x_0)$ is stochastically dominated by ${1\over W}$
where $W$ is a beta random variable with parameters $(\gamma,
\alpha_{x_0} +\gamma\theta_{x_0} -\gamma)$ (for $N$ large enough).

Considering formula (\ref{measure-Delta}) we see that on the graph $G_N$,
the measure $\P^{(\alpha)}$ is absolutely continuous with respect to the measure
$\P^{(\alpha^\gamma)}$ and that
$$
{d\P^{(\alpha)}\over d\P^{(\alpha^\gamma)}}=
 {\prod_{x\in U_N}
\Gamma(\alpha_x)\over \prod_{e\in E_N}\Gamma
(\alpha_e)}{\prod_{e\in E_N}\Gamma (\alpha^\gamma_e)\over
\prod_{x\in U_N} \Gamma(\alpha^\gamma_x)}
\prod_{e\in E_N}
p_e^{-\gamma\theta_e}.
$$
Consider now $s>0$.  We have (we write simply $G_N(x_0,x_0)$ for
$G_N^{(p)}(x_0,x_0)$ the Green function in environment $(p_e)$)
$$
\E^{(\alpha)}\left( G_N(x_0,x_0)^s\right)= {\prod_{x\in U_N}
\Gamma(\alpha_x)\over \prod_{e\in E_N}\Gamma
(\alpha_e)}{\prod_{e\in E_N}\Gamma (\alpha^\gamma_e)\over
\prod_{x\in U_N} \Gamma(\alpha^\gamma_x)}
\E^{(\alpha^\gamma)}\left( G_N(x_0,x_0)^s \prod_{e\in E_N}
p_e^{-\gamma\theta_e}\right).
$$
Using Hölder's inequality for $q>1$ and $p={q\over q-1}$ we get
that $\E^{(\alpha)}\left( G_N(x_0,x_0)^s\right)$ is lower than
\begin{eqnarray*}
{\prod_{x\in U_N} \Gamma(\alpha_x)\over \prod_{e\in E_N}\Gamma
(\alpha_e)}{\prod_{e\in E_N}\Gamma (\alpha^\gamma_e)\over
\prod_{x\in U_N} \Gamma(\alpha^\gamma_x)} \left(
\E^{(\alpha^\gamma)}\left( G_N(x_0,x_0)^{ps}\right)\right)^{1/p}
\left(\E^{(\alpha^\gamma)}\left(\prod_{e\in E_N}
p_e^{-q\gamma\theta_e}\right)\right)^{1/q}.
\end{eqnarray*}
Remark that the second expectation is finite if and only if $
q\gamma\theta_e<\alpha^\gamma_e $ for all $e$ in $E_N$, or
equivalently
$$
q-1<{\alpha_e\over \gamma \theta_e}
$$
 for all $e$ in $E_N$.
We now choose $q$ such that
\begin{eqnarray}\label{condq}
q-1<\inf_{e\in E}{\alpha_e\over \gamma\theta_e}
\end{eqnarray}
so that the previous condition is fulfilled for all $e$ in $E$. In
terms of $p$ it is equivalent to
\begin{eqnarray}\label{condp}
p>{\inf_{e\in E}{\alpha_e\over \gamma\theta_e}+1\over \inf_{e\in
E}{\alpha_e\over \gamma\theta_e}}.
\end{eqnarray}
Now we compute the second expectation. We have
\begin{eqnarray*}
&&{\prod_{x\in U_N} \Gamma(\alpha_x)\over \prod_{e\in E_N}\Gamma
(\alpha_e)}{\prod_{e\in E_N}\Gamma (\alpha^\gamma_e)\over
\prod_{x\in U_N} \Gamma(\alpha^\gamma_x)}
\left(\E^{(\alpha^\gamma)}\left(\prod_{e\in E_N}
p_e^{-q\gamma\theta_e}\right)\right)^{1/q}
\\
&=&{\prod_{x\in U_N} \Gamma(\alpha_x)\over \prod_{e\in E_N}\Gamma
(\alpha_e)}{\prod_{e\in E_N}\Gamma (\alpha^\gamma_e)^{1-{1\over
q}}\over \prod_{x\in U_N} \Gamma(\alpha^\gamma_x)^{1-{1\over
q}}}{\prod_{e\in E_N}\Gamma
(\alpha_e-(q-1)\gamma\theta_e)^{{1\over q}}\over \prod_{x\in U_N}
\Gamma(\alpha_x-(q-1)\gamma\theta_x)^{{1\over q}}}
\end{eqnarray*}
Considering the function
$$
\nu(\alpha, u)= {1\over q} \ln\Gamma(\alpha-(q-1)u) +(1-{1\over
q})\ln\Gamma(\alpha+u) -\ln \Gamma(\alpha),
$$
we see that the previous expression is equal to
$$
\exp\left( \sum_{e\in E_N} \nu(\alpha_e,
\gamma\theta_e)-\sum_{x\in U_N} \nu(\alpha_x,
\gamma\theta_x)\right).
$$
Now, $\nu(\alpha,0)=0$ and one can easily compute and see that
$ {\partial \over \partial u}\nu(\alpha, 0)=0$. The function
$\nu(\alpha,u)$ is $C^\infty$ on the domain $D=\{\alpha>0\}\cap
\{u<\alpha/(q-1)\}$.  By conditions (H1) of theorem \ref{integrability} and (\ref{condq}) we
know that $(\alpha_e, \gamma \theta_e)_{e\in E}$ and $(\alpha_x,
\gamma\theta_x)_{x\in V}$ are in a compact subset of $D$. Hence,
we can find a constant $C>0$ such that for all $N>0$
$$
\sum_{e\in E_N} \nu(\alpha_e, \gamma\theta_e)-\sum_{x\in U_N}
\nu(\alpha_x, \gamma\theta_x) \le C \left( \sum_{e\in E_N}
(\gamma\theta_e)^2+\sum_{x\in U_N} (\gamma\theta_x)^2\right).
$$
Since $(\theta_e)$ is square integrable and the graph $G$ has
bounded degree, $(\theta_x)$ is square integrable. Hence we have a
constant $C'>0$, such that for all $N$
$$
\E^{(\alpha)}\left( G_N(x_0,x_0)^s\right)\le  C'  \left(
\E^{(\alpha^\gamma)}\left( G_N(x_0,x_0)^{ps}\right)\right)^{1/p}
$$
Using corollary \ref{W} we have
$$
\E^{(\alpha)}\left( G_N(x_0,x_0)^s\right)\le   C' \left( \E\left(
W^{-ps}\right)\right)^{1/p}
$$
where $W$ is a beta random variable with parameter $(\gamma,
\alpha_{x_0}+\gamma \theta_{x_0}-\gamma)$. Since $W^{-ps}$ is
integrable for $ps<\gamma$ we see that
$$
\E^{(\alpha)}\left( G(x_0,x_0)^s\right) =\sup_N
\E^{(\alpha)}\left( G_N(x_0,x_0)^s\right) <\infty
$$
for all $s$ such that $s p <\gamma$. This is true for any choice
of $p$ which satisfies (\ref{condp}), so $G(x_0,x_0)^s$ is
integrable as soon as
$$
s<{\inf_{e\in E}{\alpha_e\over \theta_e}\over 1+ \inf_{e\in
E}{\alpha_e\over \gamma \theta_e}}.
$$
Letting $\gamma$ tend to infinity we get the result.
\end{proof}
Maximizing on the $L_2$ unit-flows, we see that under the
conditions of theorem \ref{integrability},
$\E^{(\alpha)}\left(G(x_0,x_0)^s\right)$ is finite for all
$s<\kappa_0$ where
$$
\kappa_0=\sup_{\theta\; L_2\hbox{\small -unit flow}} \inf_E
{\alpha_e\over \theta_e}.
$$
Remark that if $\theta$ is a unit flow with finite energy then
$\inf_E {\alpha_e\over \theta_e}>0$ if condition (H1) is
satisfied. Hence, under conditions (H1) and (H2),  the existence
of a unit flow with finite energy ensures that $G(x_0,x_0)$ has
some finite moments and hence the transience of the RWDE. We see
that $\kappa_0$ can be rewritten as a Max-Flow problem (cf section
7) with an extra $L_2$ condition.
 \begin{eqnarray}\label{kappa0}
\\
\nonumber \kappa_0=\sup\{\hbox{strength}(\theta), \hbox{ $\theta$
is a flow from $x_0$ to $\infty$ of finite energy such that
$\theta\le \alpha$}\}
\end{eqnarray}

\section{Proof of transience on symmetric transient graphs}
This section is devoted to the proof of theorem \ref{t-oriented}
(i). Let $G$ be a symmetric graph and $\overline G$ the associated
undirected graph. Let us recall the definition of a flow on an
undirected graph. We choose an arbitrarily orientation of the
edges of $\overline G$. A flow from $x_0$ to infinity is a
(non-necessarily positive) function $\overline\theta$ on the edges
such that for the orientation chosen
$$
\dive(\overline\theta)(x)=0, \;\forall x\neq x_0.
$$
The flow $\overline \theta$ is a unit flow if moreover
$\dive(\overline\theta)(x_0)=1$. To any flow $\overline \theta$ on
the undirected graph $\overline G$ we can associate a flow
$\theta$ on the directed graph as follows: for two opposite edges
of the directed graph, $\theta$ is null on one of them and on the
other one it is equal to the absolute value of $\overline \theta$
on the corresponding undirected edge. The choice of the edge with
positive flow is of course made according to the sign of the flow
$\overline \theta$ and the orientation of the edges (cf \cite{PL},
section 2.6). By construction the $L_2$ norm of $\theta$ and
$\overline \theta$ are the same. Then the result comes from a
classical result on electrical networks (cf \cite{PL} proposition
2.10, or \cite{TLyons}) which says that the undirected graph
$\overline G$ is transient if and only if there exists a unit flow
with finite energy from a point $x_0$ to infinity. Hence, it
implies that $\kappa_0>0$.

\section{Max-flow of finite energy}

Let us recall some notions about Max-Flow  Min-Cut theorem (cf
\cite{PL}, section 2.6, \cite{Aharoni}). Let $G$ be a countable
directed graph and $x_0$ a vertex such that there is an infinite
directed simple path starting at $x_0$. Let $(c(e))_{e\in E}$ be a
family of non-negative reals, called the capacities.
\begin{definition}\label{flow}
A flow $\theta$ from $x_0$ to $\infty$ is
compatible with the capacities $(c(e))_{e\in E}$ if
$$
\theta(e)\le c(e), \;\;\; \forall e\in E.
$$
A cutset is a subset $S\subset E$ such that any infinite directed
simple path from $x_0$ contains at least one edge of $S$.
\end{definition}
The well-known Maw-Flow Min-Cut theorem says that the maximum flow
equals the minimal cutset sum (cf \cite{Ford}). We give here a
version for countable graphs (\cite{PL}, theorem 2.19, cf also
\cite{Aharoni}).
\begin{proposition}\label{maxflow}
The maximum compatible flow equals the infimum of the cutset sum,
i.e.
\begin{eqnarray}
\nonumber
&&\max\{\hbox{strength}(\theta), \; \hbox{ $\theta$ is a flow from
$x_0$ to $\infty$ compatible with $(c(e))$}\}
\\
\label{Mincut}
&=&\inf\{c(S),\;\hbox{ $S$ is a cutset separating $x_0$ from
$\infty$}\}.
\end{eqnarray}
where
$$
c(S)=\sum_{e\in S} c(e).
$$
\end{proposition}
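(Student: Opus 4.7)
The proposition has an easy direction ($\max\le\inf$) and a harder direction, the construction of a compatible flow realising the infimum. For the easy direction, I fix a compatible flow $\theta$ and an arbitrary cutset $S$, and let $A$ denote the set of vertices reachable from $x_0$ by directed paths using no edge of $S$. Under the bounded-degree hypothesis standing throughout the paper, a K\"onig-type argument on the locally finite directed subgraph of $G$ obtained by deleting $S$ forces $A$ to be finite, since otherwise one could build an infinite directed simple path from $x_0$ entirely within $A$, contradicting the cutset property. By maximality of $A$, every edge leaving $A$ lies in $S$, and summing flow conservation over $v\in A$ yields
\begin{equation*}
\strength(\theta)=\sum_{v\in A}\dive(\theta)(v)=\sum_{\ue\in A,\,\oe\notin A}\theta(e)-\sum_{\oe\in A,\,\ue\notin A}\theta(e)\le\sum_{e\in S}c(e)=c(S).
\end{equation*}

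For the reverse direction, I exhaust $G$ by finite subgraphs. Choose an increasing sequence $V_1\subset V_2\subset\cdots$ of finite vertex sets with $x_0\in V_1$ and $\bigcup_n V_n=V$, and define a finite graph $G_n$ by identifying all vertices of $V\setminus V_n$ to a single sink $\delta_n$, retaining every remaining edge and its capacity. The classical finite Max-Flow Min-Cut theorem produces, on each $G_n$, a maximum flow $\theta_n$ from $x_0$ to $\delta_n$ whose strength $v_n$ equals the minimum cutset capacity in $G_n$. Any such minimum cutset lifts to a set of edges in $E$ separating $x_0$ from infinity in $G$: indeed, an infinite directed simple path from $x_0$ in $G$ must eventually leave $V_n$, inducing a directed path from $x_0$ to $\delta_n$ in $G_n$ that necessarily crosses the cutset. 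Hence $v_n\ge\inf\{c(S):S\text{ is a cutset separating }x_0\text{ from }\infty\}$.

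After extending each $\theta_n$ by zero outside $G_n$, I view the flows as points in the compact (and metrisable, since $E$ is countable) product space $\prod_{e\in E}[0,c(e)]$ and extract a subsequential limit $\theta$. Componentwise convergence preserves compatibility $\theta(e)\in[0,c(e)]$. Bounded degree turns divergence into a finite sum at every vertex, so the conservation identity $\dive(\theta_n)(v)=0$ (valid for $v\neq x_0$ as soon as $V_n$ contains the closed neighbourhood of $v$) passes to $\dive(\theta)(v)=0$, and similarly $\strength(\theta)=\lim_n\strength(\theta_n)\ge\inf_S c(S)$. Combined with the easy direction this proves equality.

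The main subtlety is precisely this limiting step: without local finiteness, divergence at a vertex is an infinite series and pointwise convergence of the $\theta_n$ need not preserve conservation. Under the paper's bounded-degree standing hypothesis the difficulty disappears; the fully general countable case (Aharoni) requires a more involved combinatorial construction, which is the reason the statement is attributed to external references rather than reproved here.
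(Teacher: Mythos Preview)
The paper does not prove this proposition at all: it is quoted as a known result, with references to Lyons--Peres (Theorem~2.19) and to Aharoni et al., and no argument is supplied. So there is no ``paper's own proof'' to compare your attempt against.

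That said, your argument is essentially the standard proof one finds in the cited references, and it is correct under the bounded-degree hypothesis that is indeed standing in the paper. The K\"onig step in the easy direction is sound: if $A$ were infinite, the tree of simple directed paths from $x_0$ within $A$ would be locally finite and infinite, yielding an infinite simple path avoiding $S$. In the hard direction, the exhaustion by finite graphs, the finite Max-Flow Min-Cut step, the observation that any $G_n$-cutset is a $G$-cutset (so $v_n\ge\inf_S c(S)$), and the diagonal/Tychonoff extraction are all fine; bounded degree is exactly what lets you pass the conservation law and the strength to the limit as finite sums. Your closing remark that the fully general countable case needs the more delicate combinatorics of Aharoni et al.\ is also accurate and matches the paper's choice to cite rather than prove.
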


Theorem \ref{integrability} tells us that $\kappa_0$, defined as
the max strength of flows of finite energy (cf (\ref{kappa0})),
gives a lower bound on the critical integrability exponent of the
Green function. It is natural to ask wether $\kappa_0$ is also
equal to the min-cut. It is not true in general (cf the following
remark) but it is true under fairly general conditions.

\begin{proposition}\label{maxflowL2}
Let $(c(e))_{e\in E}$ be a family of capacities. Suppose that
 $$\inf_{e\in E} c(e) >0,$$
and that the following holds

(H3) There exists a strictly increasing sequence of integers
$\eta_n$ such that $B(x_0, \eta_{n+1})\setminus B(x_0, \eta_n)$ is
strongly connected in $G$.

If there exists a unit flow of finite energy on $G$ from $x_0$ to
$\infty$ then the infimum in (\ref{Mincut}) is reached and
\begin{eqnarray*}
&&\max\{\hbox{strength}(\theta), \; \hbox{ $\theta$ is a flow from
$x_0$ of finite energy compatible with $(c(e))$}\}
\\
&=& \min\{c(S),\;\hbox{ $S$ is a cutset separating $x_0$ from
$\infty$}\}.
\end{eqnarray*}
\end{proposition}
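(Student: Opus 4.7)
The plan is to establish both assertions of the proposition in tandem: that $\kappa_0 := \sup\{\strength(\theta)\}$ over finite-energy flows $\theta \le c$ equals $m := \inf\{c(S)\}$ over cutsets $S$, and that the infimum is attained. The upper bound $\kappa_0 \le m$ is immediate from Proposition~\ref{maxflow}, since every finite-energy flow is in particular a flow compatible with $c$.

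First I would pass to the finite contracted graphs $G_n = (U_n \cup \{\delta_n\}, E_n)$ introduced in the proof of Theorem~\ref{integrability}, with $U_n = B(x_0, \eta_n)$, edges exiting $U_n^c$ discarded, and edges with head outside $U_n$ sent to an absorbing vertex $\delta_n$. Since $G_n$ is finite, the classical max-flow min-cut theorem provides a max flow $\theta^{(n)}$ of strength $m_n$ and a min cutset $S_n \subset E_n$ with $c(S_n) = m_n$. Any cutset of $G_n$ is also a cutset of $G$ (every infinite directed simple path from $x_0$ must eventually exit the finite ball $U_n$, and thereby use one of the edges going to $\delta_n$), so $m \le m_n$. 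Once I produce a finite-energy $c$-compatible flow on $G$ of strength $m_n$, the chain $m \le m_n \le \kappa_0 \le m$ will force equality throughout, giving $\kappa_0 = m$ and making $S_n$ a minimizer of the cutset infimum.

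The heart of the argument is lifting $\theta^{(n)}$ to a finite-energy flow $\zeta^{(n)}$ on $G$. The naive extension by zero is not a flow: at each vertex $y \in V \setminus U_n$ that is the head of a boundary edge from $U_n$, the extended function has divergence $-\rho^{(n)}(y)$, where the distribution $\rho^{(n)}$ on the outer boundary has total mass $m_n$. To drain this leak to infinity inside $\ell^2(E)$ and under $c$, I would exploit the unit flow $\phi$ of finite energy. Since $\phi \in \ell^2(E)$ forces $\phi(e) \to 0$ as the depth of $e$ grows, for $n$ large enough $\sup_{\ue \notin U_n} \phi(e) \le c_0/(2 m_n)$ with $c_0 := \inf_E c > 0$, so that $m_n \phi$ restricted to edges entirely outside $U_n$ is a finite-energy flow of strength $m_n$ dominated by $c/2$, draining a source distribution $m_n \rho^\phi_n$ on the outer boundary of $U_n$ to infinity.

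It remains to reconcile $\rho^{(n)}$ with $m_n \rho^\phi_n$, two distributions of equal total mass $m_n$ on the outer boundary of $U_n$. Here (H3) enters: within the strongly connected shell $A_n = B(x_0, \eta_{n+1}) \setminus B(x_0, \eta_n)$, or by cascading through several successive shells if necessary, I would construct a finitely supported transport $\mu^{(n)}$ on edges outside $U_n$, vanishing on edges re-entering $U_n$, with $\dive(\mu^{(n)}) = \rho^{(n)} - m_n \rho^\phi_n$ on $V \setminus U_n$ and $\mu^{(n)} \le c/2$ pointwise. Strong connectivity, bounded degree, and the uniform bound $c \ge c_0 > 0$ together allow diluting the transport over sufficiently many edges to maintain the per-edge bound, while finite support yields finite $\ell^2$-energy. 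Then $\zeta^{(n)} := \theta^{(n)} + \mu^{(n)} + m_n \phi|_{\{e:\ \ue \notin U_n,\ \oe \notin U_n\}}$ has the correct divergences (zero outside $x_0$, $m_n$ at $x_0$), is dominated by $c$, and lies in $\ell^2(E)$. The main technical obstacle is precisely this transport construction: a single shell may be too thin to carry the rebalancing flow while keeping it below $c/2$, so one must dilute across several consecutive shells, which is exactly what (H3), bounded degree, and $c_0 > 0$ make possible.
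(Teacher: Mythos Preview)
Your approach diverges from the paper's, and the rebalancing step you yourself flag as ``the main technical obstacle'' is a genuine gap, not a detail to be filled in later. You need a non-negative, finitely supported $\mu^{(n)}$ on the exterior of $U_n$ with $\dive(\mu^{(n)})=\rho^{(n)}-m_n\rho^\phi_n$ and $\mu^{(n)}\le c/2$ edgewise; but this is itself a capacitated routing problem. Strong connectivity of a shell guarantees only the \emph{existence} of a directed path between any two of its vertices, not any lower bound on the min-cut between them: a strongly connected shell can have a one-edge bottleneck (think of a directed cycle). If all routes from the support of $\rho^{(n)}$ to the support of $m_n\rho^\phi_n$ funnel through such a bottleneck, the full discrepancy mass---up to $m_n$---must cross it regardless of how many shells you spread over; ``dilution'' cannot reduce the total flux through a fixed cut. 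Nothing in (H3), bounded degree, or $\inf_E c>0$ rules this out. You also need $\mu^{(n)}\ge 0$ on every edge where $\phi$ vanishes (else $\zeta^{(n)}$ fails to be a non-negative flow), which you do not address.

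The paper avoids this surgery entirely. Rather than building the finite-energy flow by hand, it modifies the capacities: set $c'(e)=c(e)$ for edges with tail in a large ball $B(x_0,\eta_{N_1})$ and $c'(e)=2c(G)\,\theta(e)$ outside, where $\theta$ is the given finite-energy unit flow and $c(G)$ the min-cut. Then $c'\le c$ and $c'\in\ell^2(E)$. Condition~(H3) is used only for a counting argument: any minimal cutset with one edge near $x_0$ and one edge far away must contain at least one edge in each of the $N_1-N_0$ intermediate shells (strong connectivity forces a crossing of the cutset in every shell), hence has large $c'$-sum; cutsets entirely near $x_0$ have $c'$-sum equal to their $c$-sum $\ge c(G)$; cutsets entirely far away have $c'$-sum $\ge 2c(G)$ since $\theta$ is a unit flow. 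Thus the $c'$-min-cut is still $\ge c(G)$, and a single application of the countable max-flow min-cut theorem (Proposition~\ref{maxflow}) to $c'$ produces a flow of strength $c(G)$ that is automatically in $\ell^2$ (being $\le c'$) and compatible with $c$ (since $c'\le c$). The same case analysis shows the minimum is achieved by a cutset inside the finite ball. The max-flow min-cut theorem does the work of locating the flow; no boundary matching is needed.
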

\begin{remark}: If condition (H3) fails the equality may be wrong. The
following counter-example is due to R. Aharoni,
\cite{Aha-private}: let $G$ be a binary tree glued by its root to
a copy of $\Z_+$. Take capacities constant equal to 1. Then any
flow of finite energy necessarily vanishes on the copy of $\Z_+$
and the equality cannot hold.
\end{remark}
\begin{proof}
Let $\theta$ be a unit flow from $x_0$ to $\infty$ of finite
energy. Set
$$
c(G)=\inf\{c(S),\;\hbox{ $S$ is a cutset separating $x_0$ from
$\infty$}\}.
$$
The strategy is to modify the capacities $c$ using $\theta$. For a
positive integer $r$, $B_E(x_0,r)$ denotes the set of edges
$$
B_E(x_0,r)=\{e\in E, \;\; \ue\in B(x_0,r), \oe \in B(x_0,r)\}.
$$
and
$$
\underline B_E(x_0,r)=\{e\in E, \;\; \ue\in B(x_0,r) \}.
$$
 Let
$N_0$ be such that
\begin{eqnarray}\label{c}
\sup_{e\notin B_E(x_0,\eta_{N_0})} \theta(e)\le {\inf_E c\over
2c(G)}.
\end{eqnarray}
Let $N_1$ be such that
$$
(N_1-N_0)\inf_E c >  2 c(G).
$$
Consider now the capacities $c'$ defined by
$$
c'(e)=\left\{\begin{array}{ll} c(e), \; &\hbox{ if $e\in
\underline B_E(x_0,\eta_{N_1})$}\\
2 c(G) \theta(e), \; &\hbox{ if $e\notin \underline
B_E(x_0,\eta_{N_1})$}.
\end{array}
\right.
$$
By condition (\ref{c}), we clearly have
$$
c'(e) \le c(e), \forall e\in E,
$$
and
$$
\sum_E (c'(e))^2<\infty.
$$
We now want to check that the min cutset sum is the same for $c$
and $c'$. Let $S$ be a minimal cutset for inclusion. If $S\subset
\underline B_E(x_0, \eta_{N_1})$ then $c'(S)=c(S)\ge c(G)$. If
$S\subset B_E(x_0,\eta_{N_0})^c$ then by (\ref{c}) $c'(S)\ge 2c(G)
\theta(S)\ge 2c(G)$ since $\theta$ is a unit flow. Otherwise, it
means that $S$ has one edge $e_0$ in $B_E(x_0,\eta_{N_0})$ and one
edge $e_1$ in $\underline B_E(x_0, \eta_{N_1})^c$. Let $K$ be the
set of vertices that can be reached by a directed path in
$E\setminus S$ from $x_0$. Since $S$ is minimal for inclusion, it
means that there is a simple path from $\oe_0$ to $\infty$ in
$K^c$. Hence, there is a directed path in $K^c$ from $B(x_0,N_0)$
to $\infty$, and there is a sequence $y_1, \ldots, y_{N_1-N_0}$ in
$K^c$ such that $y_k\in U_k$ where
$$
U_k=B(x_0, \eta_{k+1})\setminus B(x_0, \eta_{k}).
$$
Similarly, there is a directed path in $K$ from $x_0$ to $\ue_1$.
It implies that there is a sequence $z_1, \ldots, z_{N_1-N_0}$ in
$K$ such that $z_k\in U_k$. By assumption (H3) there is a directed
path in $U_k$ from $z_k$ to $y_k$. This directed path necessarily
contains an edge of $S$. Hence, $ \vert S\vert\ge N_1-N_0$ so that
$c'(S)> 2 c(G)$. Then, we apply the Max-Flow Min-Cut theorem to
the capacities $c'$. It gives a flow of finite energy (since $c'$
is squared integrable) compatible with $c'$ and consequently with
$c$, and with strength $c(G)$. Moreover the proof implies that
$c(G)$ is reached for a cut-set $S\subset \underline B_E(x_0,
\eta_{N_1})$, and the infimum is a minimum.
\end{proof}

\section{Proof of theorem \ref{t-oriented} (ii)}

Let $\kappa_0$ be defined as in section 5 (\ref{kappa0}), and
$\kappa$ as in theorem \ref{t-oriented} (ii). Clearly, if the non-directed
graph $\overline G$ satisfies (H'3) then the directed graph $G$ satisfies
(H3). Let us first prove
that $G^s(x_0,x_0)$ is integrable if $s<\kappa$. Under condition
(H3) we have $\kappa_0\le \kappa$. Indeed, if $K$ is a finite
connected subset containing $x_0$ then $\partial_E(K)$ is a
cut-set separating $x_0$ from infinity and
$\alpha(\partial_E(K))\ge \kappa_0$ by proposition
\ref{maxflowL2}.

{\bf Case 1.} If $(x_0,x_0)$ is in $E$ then we have
$\kappa_0=\kappa$. Indeed, let $S_0$ be a cut-set which realizes
the minimum in proposition \ref{maxflowL2}, so that
$\kappa_0=\alpha(S_0)$. Let $K_0\subset V$ be the set of vertices
that can be reached from $x_0$ by a directed path using edges of
$E\setminus S_0$. By minimality we have $S_0=\partial_E(K_0)$ and
$\kappa_0=\alpha(\partial_E K_0)$. The subset $K_0$ is finite and
connected and it contains $x_0$; it implies that $\kappa=
\kappa_0$. It implies by theorem \ref{integrability} that
$\E^{(\alpha)}(G(x_0,x_0)^s)<\infty $ for $s<\kappa$.

{\bf Case 2.} Suppose that $(x_0,x_0)\not\in E$. For any
environment $(p)$, we have
$$
\sum_{\ue =x_0} p_e=1,
$$
hence $G(x_0,x_0)^s$ is integrable if and only if $p_e^s
G(x_0,x_0)^s$ is integrable for any $e$ such that $\ue=x_0$. Let
$e_0$ be an edge exiting $x_0$ and $\alpha^{(e_0)} $ be the weight
obtained from $\alpha$ by adding $s$ to $\alpha_{e_0}$. For any
cutset $S$ containing $e_0$, $\alpha^{(e_0)}(S)>s$. Hence, by
theorem \ref{integrability} and proposition \ref{maxflowL2},
$p_{e_0}^s G(x_0,x_0)^s$ is integrable if $s$ is smaller than the
minimal cutset sum (for $(\alpha)$) among cutsets which do not
contain $e_0$. It means that $G(x_0,x_0)^s$ is integrable if
\begin{eqnarray}\label{fin}
s<\min\{\alpha(S), \; \hbox{$S$ cutset, $S\not\supset
\{e\}_{\ue=x_0}$}\}.
\end{eqnarray}
Let $S$ be a cutset which does not contain $\{e\}_{\ue=x_0}$. Let
$K$ be the set of vertices that can be reached from $x_0$ by a
directed path using edges of $E\setminus S$. Then, $\partial_E K$
is a cutset contained in $S$ and $K$ is connected in $\overline
G$. Moreover $K\neq\{x_0\}$ thanks to the condition that
$(x_0,x_0)\not\in E$ (indeed, if $K=\{x_0\}$ then $\partial_E
(K)=\{e\}_{\ue =x_0}$ if $(x_0,x_0)\not\in E$). This implies that
$G^s(x_0,x_0)$ is integrable if $s<\kappa$.

If $s\ge \kappa$, then by taking a large enough box $B(x_0,N)$, we
know that the cutset which achieves the minimum in the definition
of $\kappa$ is included in $B(x_0,N)$ (the infimum is reached, cf
proposition \ref{maxflowL2}). From \cite{Tournier}, corollary 4,
it implies that $G(x_0,x_0)^s$ is not integrable (the statement is
written for $(x_0,x_0)\not\in E$ but it clearly extends to the
case where there is a loop at $x_0$ with the $\kappa$ defined in
theorem \ref{t-oriented}).

\subsection{The case of $\Z^d$}
Of course theorem \ref{t-oriented} (i) and (ii) apply to the model
of RWDE on $\Z^d$ described in the introduction, for $d\ge 3$. It
is easy to see that on $\Z^d$, the critical exponent $\kappa$ is
obtained for $K=\{0,e_{i_0}\}$ for some $i_0$ in $\{1, \ldots,
d\}$. Hence we have
$$
\kappa=\min_{i_0=1, \ldots , d}\left(2\left(\sum_{i=1, \ldots, d,
\; i\neq i_0} \alpha_{e_i}+\alpha_{-e_i}\right)+
\alpha_{e_{i_0}}+\alpha_{-e_{i_0}}\right).
$$
This proves theorem \ref{Zd}.

\section{Appendix}

\begin{proof} (of lemma \ref{chgt-variables})
Let us first remark that $\det(Z_{|U\times U})$ does not depend on
the choice of $x_0$: indeed, the lines and columns of $Z$ have sum
0. By addition of lines and columns we can switch from $x_0$ to
$y_0$. Hence, we can freely choose $x_0=\ue_0$. Remark now that
$\hhh$ and $\hhh_{e_0}$ are the affine subspaces of $\R^E$ defined
by
$$
\hhh=\cap_{x\in U}\{\dive(z)(x)=0\}, \;\;\;
\hhh_{e_0}=\{z_{e_0}=1\}\cap_{x\in U}\{\dive(z)(x)=0\}.
$$
For simplification, we write
$$
h_x(z)=\dive(z)(x),
$$
for all $x$ in $U$. It is not easy to compute directly the
Jacobian of the change of variables, the strategy is to use
Fourier transform to make the change of variables on free
variables. We first prove the following lemma.
\begin{lemma}
\label{l.Fourier} For any function $\phi:\BR^E\rightarrow \BC$,
$C^\infty$, with compact support
 in $(\BR_+^*)^E$ we have that
\begin{eqnarray*}
\int_{\hhh_{e_0}} \phi_{|\hhh_{e_0}} \prod_{e\in B^c} d z_e=
\end{eqnarray*}
\begin{eqnarray*}
\int_{\R^U\times \R} \int_{\R^E}\phi(z) \exp\left( 2i\pi\left(
u_0(z_{e_0}-1)+\sum_{x\in U } u_x h_x(z) \right)\right)
\left(\prod_{e\in E} dz_e\right) \left( du_0 \prod_{x\in U}
du_x\right).
\end{eqnarray*}
\end{lemma}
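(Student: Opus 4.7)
The identity is a Fourier representation of the surface measure on the affine subspace $\hhh_{e_0}$, written as an oscillatory integral over $(u_0, (u_x)_{x\in U})\in\R^{1+|U|}$. The compact support of $\phi$ in $(\R_+^*)^E$ ensures that everything is honest, once we reorganize the computation as a Fourier inversion of a smooth, compactly supported density, rather than as an iterated absolutely convergent integral.

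First I would introduce the linear map $F:\R^E\to\R\times\R^U$ defined by $F(z)=(z_{e_0},(h_x(z))_{x\in U})$, so that $\hhh_{e_0}=F^{-1}(1,0,\dots,0)$. Since $G$ is strongly connected, $F$ is surjective, so the pushforward $F_*(\phi\,dz)$ has a $C^\infty_c$ density $\rho$ on $\R^{1+|U|}$. The right-hand side of the lemma, for fixed $z$, is (after interchanging $u$- and $z$-integrations in the sense of distributions) the inverse Fourier transform of $\widehat{\rho}$ evaluated at $(1,0,\dots,0)$, hence equals $\rho(1,0,\dots,0)$. This step is the rigorous content of the otherwise formal identity ``$\int e^{2i\pi u\cdot(\cdot)}du=\delta(\cdot)$'', and I would carry it out by Fourier inversion on Schwartz functions applied to $\rho$ (not to the oscillatory integrand itself).

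Next I would evaluate $\rho(1,0,\dots,0)$ by an explicit change of variables. Split coordinates as $z=(z_B,z_{B^c})$. For each fixed $z_{B^c}$, the restriction $F\!\upharpoonright_{\R^B}$ is an affine map $\R^B\to\R\times\R^U$ between spaces of equal dimension $|V|$, with matrix (ordering the first row by $e_0$ and the remaining rows by $x\in U$, and the columns by $e_0$ followed by the edges of $T$)
\[
\begin{pmatrix} 1 & 0 \\ * & M_T \end{pmatrix},
\]
where $M_T$ is the reduced incidence matrix of the spanning tree $T$ (rows indexed by $U$, columns by $T$, entries $\mathbbm{1}_{\ue=x}-\mathbbm{1}_{\oe=x}$). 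A classical induction, peeling $T$ one leaf at a time starting from the leaves farthest from $x_0$, shows $|\det M_T|=1$, so the restricted Jacobian has modulus $1$. By the linear change of variables formula, the density of $F_*(\phi\,dz)$ at any point $w$ equals the fiber integral of $\phi$ against $\prod_{e\in B^c}dz_e$; specializing to $w=(1,0,\dots,0)$ gives $\rho(1,0,\dots,0)=\int_{\hhh_{e_0}}\phi\,\prod_{e\in B^c}dz_e$, which matches the left-hand side.

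The main obstacle is making the Fourier step rigorous: the integrand on the right is not absolutely integrable in $(z,u)$, so one cannot simply apply Fubini. The cleanest workaround is to avoid integrating the exponential directly and instead define $\rho$ first, note that $\rho\in C^\infty_c$ (hence Schwartz), and apply ordinary Fourier inversion to $\rho$; alternatively one inserts a Gaussian regulator $e^{-\varepsilon|u|^2}$ and passes to the limit using dominated convergence once $\rho$ has been identified. The combinatorial fact $|\det M_T|=1$ is standard but must be verified to identify the two parametrizations of $\hhh_{e_0}$.
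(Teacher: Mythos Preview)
Your proposal is correct and follows essentially the same route as the paper: both arguments reduce the identity to Fourier inversion on a compactly supported smooth function together with the linear change of variables $(z_e)_{e\in E}\leftrightarrow((z_{e_0},(h_x)_{x\in U}),(z_e)_{e\in B^c})$, and both hinge on the fact that the reduced incidence matrix of the spanning tree $T$ has determinant $\pm 1$. The only cosmetic difference is that you package the Fourier step via the pushforward density $\rho$ and apply inversion once, whereas the paper states the one-variable inversion formula and applies it twice (first for $u_0$, then for $(u_x)_{x\in U}$), invoking the matrix-tree theorem for $|\det M_T|=1$ where you use a leaf-peeling induction.
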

\begin{proof} {\it of lemma \ref{l.Fourier}.}
We will use several times the following simple fact (which is a
consequence of the inverse Fourier transform). Let
$g:\R^{N+k}\rightarrow \R$ be a $C^\infty$ function with compact
support, then
\begin{eqnarray*}
\label{lemme-Fourier}
 &&\int_{\R^N} g(x_1, \ldots ,x_N, 0, \ldots
,0) dx_1\cdots dx_N
\\
&=& \int_{\R^k}\int_{\R^{N+k}} \exp\left( 2 i\pi \sum_{j=1}^k u_j
x_{N+j}\right) g(x_1, \ldots ,x_{N+k}) (dx_1 \cdots dx_{N+k})
(du_1 \cdots du_k)
\end{eqnarray*}
N.B.:  These integrals are well-defined as integrals in the
Schwartz space.
 \ali
Let us compute the Jacobian of the linear change of variables
\begin{eqnarray}\label{chgt}
\nonumber \R^E&\mapsto & \R^U\times \R^{T^c}
\\
((z_e)_{e\in E}) &\to &( (h_x(z))_{x\in U}, (z_e)_{e\in T^c}).
\end{eqnarray}
Denoting $T=\{e_{j_1}, \ldots ,e_{j_\cU}\}$ and $T^c=\{e_{i_1},
\ldots ,e_{i_{\cE-\cU}}\}$, $U=\{x_1, \ldots ,x_{\vert U\vert}\}$,
the Jacobian of the change of variable is:
$$
\vert J\vert =\vert \det\left(\begin{array}{cccccc} {\partial
h_{x_1}\over \partial z_{e_{j_1}}} & \cdots &
 {\partial h_{x_\cU}\over \partial z_{e_{j_1}}}
& & &
\\
\vdots & &\vdots &&0&
\\
 {\partial h_{x_1}\over
\partial z_{e_{j_\cU}}}& \cdots &
 {\partial h_{x_\cU}\over \partial z_{e_{j_\cU}}}
 &&&
\\
 {\partial h_{x_1}\over
\partial z_{e_{i_1}}}& \cdots &
 {\partial h_{x_\cU}\over \partial z_{e_{i_1}}}
 &&&
\\
\vdots &&\vdots &&\Id_{T^c\times T^c}&
\\
{\partial h_{x_1} \over
\partial z_{e_{i_{\cE-\cU}}}}& \cdots &
 {\partial h_{x_\cU}\over \partial z_{e_{i_{\cE-\cU}}}}
 &&&
 \end{array}
 \right)\vert.
$$
So, we get
$$
\vert J\vert= \vert \det\left(\begin{array}{ccc}
 {\partial h_{x_1}\over \partial z_{e_{j_1}}} & \cdots &
 {\partial h_{x_\cU}\over \partial z_{e_{j_1}}}
\\
 \vdots &&\vdots
\\
{\partial h_{x_1}\over \partial z_{e_{j_\cU}}} & \cdots &
 {\partial h_{x_\cU}\over \partial z_{e_{j_\cU}}}
\end{array}
\right)\vert.
$$
The previous matrix is the incidence matrix on $U$ of the spanning
tree $T$, indeed we have if $x\in U$
$$
{\partial h_{x}\over \partial z_{e}}=\left\{
\begin{array}{l}
+1 \hbox{ if $\ue=x$},
\\
-1 \hbox{ if $\oe=x$},
\\
0 \hbox{ otherwise}.
\end{array}
\right.
$$
It is well know that this determinant is equal to $\pm 1$ :
indeed, it is a special case of the Kirchhoff's matrix-tree
theorem (see \cite{matrix-tree}) where the graph is only composed
of the edges of the spanning tree $T$; in this case there is a
unique spanning tree, $T$ itself, and the incident matrix of the
graph is the matrix below). Let $\tilde\phi: \R^U\times
\R^{T^c}\mapsto\C$ denote the function defined by
$$
\phi((z_e)_{e\in E})=\tilde\phi((h_x(z))_{x\in U}, (z_e)_{e\in
T^c}).
$$
By formula (\ref{lemme-Fourier}) and since
$\tilde\phi(0,(z_e)_{e\in T^c})=\phi((z_e)_{e\in E})$ on $\hhh$,
we get
\begin{eqnarray*}
\int_{\hhh_{e_0}} \phi_{|\hhh_{e_0}}((z_e)) \prod_{e\in B^c} dz_e
&=& \int_\R \int_\hhh  \phi_{|\hhh} ((z_e)) e^{2i\pi
u_0(z_{e_0-1})} \left(\prod_{e\in T^c} dz_e\right)du_0
\\
&=& \int_\R \int_\hhh  \tilde\phi(0,(z_e)_{e\in T^c}) e^{2i\pi
u_0(z_{e_0-1})} \left(\prod_{e\in T^c} dz_e\right)du_0
\end{eqnarray*}
Using again formula \ref{lemme-Fourier} we see that the previous
integral is equal to
\begin{eqnarray*}
= \int_{\R\times \BR^U} \int_{\BR^U\times \R^{T^c}}&
\exp\left(2i\pi \left( u_0(z_{e_0}-1)+\sum_{x\in U}u_x
h_x\right)\right) \tilde \phi((h_x)_{x\in U},(z_e)_{e\in T^c})
\\
& \left( \prod_{x\in U} dh_x \prod_{e\in T^c} dz_e\right)
\left(du_0 \prod_{x\in U} du_x\right).
\end{eqnarray*}
Then, the change of variables (\ref{chgt}) gives
lemma \ref{l.Fourier}.
\end{proof}

We make the following change of variables:
\begin{eqnarray*} (\BR_+^*)^E&\rightarrow&
(\R_+^*)^V\times \Delta
\\
(z_e)&\mapsto & ((v_x)_{x\in V}, (p_e)_{e\in E}),
\end{eqnarray*}
given by
$$
v_x=\sum_{\ue =x} z_e, \;\; p_e={z_e\over v_\ue}.
$$
(More precisely, it is the change of variables onto the set
$(\R_+^*)^V\times ]0,1]^{\tilde E}$ where $\tilde E$ is defined in
(\ref{measure-Delta}). It means that we choose $(p_e)_{e\in \tilde
E}$ as the coordinate system on $\Delta$.)  We have $z_e=v_\ue\;
p_e$ and the Jacobian is given by
$$
\prod_{x\in V} v_x^{n_x-1},
$$
where $n_x=\vert \{e, \ue =x\}\vert$. Implementing this change of
variables in lemma \ref{l.Fourier} gives that
\begin{eqnarray*}
&&\int_{\hhh_{e_0}} \phi_{|\hhh_{e_0}} \prod_{e\in B^c} dz_e
\end{eqnarray*}
is equal to
\begin{eqnarray*}
\int_{\R\times \BR^U} \int_{\BR^V\times \Delta }\left(\prod_{x\in
V} v_x^{n_x-1}\right)\phi((v_\ue p_e)) & \exp\left(
iu_0(v_{\ue_0}p_{e_0}-1)+ i\sum_{x\in U}u_x h_x \right)
 \\
&\left((\prod_{x\in V} dv_x) d\lambda_{\Delta}\right) \left( du_0
\prod_{x\in U} du_x\right),
\end{eqnarray*}
where $d\lambda_{\Delta}$ is the measure on  $\Delta$ defined in
(\ref{measure-Delta}) and with
$$
h_x((v_x),(p_e))= v_x-\sum_{e,\oe=x}v_{\ue}p_e.
$$
The strategy is now to apply formula (3) to get ride of variables
$(u_0,(u_x)_{x\in U})$. For this we need to change to variables
$(v_{e_0}p_{e_0}, (h_x)_{x\in U})$. We make the following change
of variables.
\begin{eqnarray*}
\R^V\times \Delta&\mapsto & \R\times\R^U\times \Delta
 \\
((v_x)_{x\in V}, (p_e))& \to &(k_0(v,p),(h_x(v,p))_{x\in U},
(p_e)),
\end{eqnarray*}
with $k_{0}=v_{\ue_0}p_{e_0}$. This change of variables can be
inverted by
$$
(v_x)_{x\in V}= (M^{(p)})^{-1}(k_0,(h_x)),
$$
where
$$
M^{(p)}_{x,y}= (I-P)_{x,y}, \;\;\; \forall x\in U, \forall y\in V,
$$
and $M_{0, y}= p_{e_0}\indic_{y=\ue_0}$ (where $P$ is the
transition matrix in the environment $(p)$). Since we have chosen
$x_0=\ue_0$, the Jacobian of the change of variables is
$$
\vert J\vert = p_{e_0} \det(I-P)_{U\times U}.
$$
By this change of variables, the integral becomes
\begin{eqnarray*}
\int_{\R\times \BR^U} \int_{\R\times\BR^U\times \Delta
}\phi((v_\ue p_e)){\left(\prod_{x\in V} v_x^{n_x-1}\right)\over
p_{e_0}\det(I-P)_{U\times U}} & \exp\left( 2i\pi\left( u_0(k_0-1)+
\sum_{x\in U}u_x h_x \right)\right)
 \\
&\left((d k_0\prod_{x\in U} dh_x) d\lambda_{\Delta}\right) \left(
du_0 \prod_{x\in U} du_x\right),
\end{eqnarray*}
Remark that if the following equalities are satisfied
$$
h_x((v_x),(p_e))=0 , \;\; \forall x\in U, \;\; v_{\ue_0}p_{e_0}=1
$$
it implies that $v_x={\pi_x\over \pi_{\ue_0}p_{e_0}}$ for all $x$.
Integrating over the variables $(u_0,(u_x))$ by formula
(\ref{lemme-Fourier}) we get
\begin{eqnarray*}
\int_{\hhh_{e_0}} \phi_{|\hhh_{e_0}} \prod_{e\in B^c} dz_e =
\int_\Delta \prod_{x\in V} \left({\pi_x\over
\pi_{\ue_0}p_{e_0}}\right)^{n_x-1} {\phi((z_e)) \over
p_{e_0}\det(I-P)_{U\times U}} d\lambda_\W,
\end{eqnarray*}
where $z_e={\pi_{\ue} p_e\over \pi_{\ue_0}p_{e_0}}$. Then we just
have to replace $\phi$ by
$$\phi((z_e))= \psi\left(({z_e\over z_{\ue}})\right) {\prod_{e\in E} \indic_{z_e>0}
z_e^{\alpha_e-1}\over\prod_{x\in V } z_x^{\alpha_x}}
\det(Z_{|U\times U})
$$
This can be done by monotone convergence.
\end{proof}

\ali \noindent{\it Acknowledgement :} I am very grateful to
Nathanaël Enriquez for several discussions on the topics of this
paper. I also thank Ron Aharoni and Russell Lyons for useful
discussions on Max-Flow Min-Cut theorem.

\end{document}